\newcommand{\Z}{{\mathbb{Z}}}
\newcommand{\C}{{\mathbb{C}}}
\newcommand{\F}{{\mathbb{F}}}
\newcommand{\fg}{{\mathfrak{g}}}
\newcommand{\fp}{{\mathfrak{p}}}
\newcommand{\ft}{{\mathfrak{t}}}
\newcommand{\fz}{{\mathfrak{z}}}
\newcommand{\cG}{{\mathcal{G}}}
\newcommand{\cN}{{\mathcal{N}}}
\newcommand{\cU}{{\mathcal{U}}}
\newcommand{\cC}{{\mathfrak{C}}}
\newcommand{\cO}{{\mathcal{O}}}
\newcommand{\cb}{{\mathbf{b}}}
\newcommand{\hgt}{{\operatorname{ht}}}
\newcommand{\AV}{{\operatorname{AV}}}
\newcommand{\Ind}{{\operatorname{Ind}}}
\newcommand{\Irr}{{\operatorname{Irr}}}
\newcommand{\GL}{{\operatorname{GL}}}
\newcommand{\SL}{{\operatorname{SL}}}
\newcommand{\Gder}{G_{\operatorname{der}}}
\renewcommand{\leq}{\leqslant}
\renewcommand{\geq}{\geqslant}
\newtheorem{thm}{Theorem}[section]
\newtheorem{cor}[thm]{Corollary}
\newtheorem{lem}[thm]{Lemma}
\newtheorem{prop}[thm]{Proposition}
\newtheorem{conj}[thm]{Conjecture}
\theoremstyle{definition}
\newtheorem{defn}[thm]{Definition}
\newtheorem{exmp}[thm]{Example}
\newtheorem{abs}[thm]{}
\theoremstyle{remark}
\newtheorem{rem}[thm]{Remark}
\numberwithin{equation}{section}
\begin{document}

\title[Generalised Gelfand--Graev representations]{Generalised 
Gelfand--Graev representations in bad characteristic~?}
\author{Meinolf Geck}
\address{IAZ - Lehrstuhl f\"ur Algebra\\Universit\"at Stuttgart\\ 
Pfaffenwaldring 57\\D--70569 Stuttgart\\ Germany}
\email{meinolf.geck@mathematik.uni-stuttgart.de}

\subjclass{Primary 20C33; Secondary 20G40}
\keywords{Finite groups of Lie type, unipotent classes, nilpotent orbits} 
\date{November 1, 2018}

\begin{abstract} Let $G$ be a connected reductive algebraic group
defined over a finite field with $q$ elements. In the 1980's, Kawanaka
introduced generalised Gelfand-Graev representations of the finite group
$G(\F_q)$, assuming that $q$ is a power of a good prime for $G$. These 
representations have turned out to be extremely useful in various contexts.
Here we investigate to what extent Kawanaka's construction can be carried 
out when we drop the assumptions on~$q$. As a curious by-product, we
obtain a new, conjectural characterisation of Lusztig's concept of 
special unipotent classes of $G$ in terms of weighted Dynkin diagrams.
\end{abstract}

\maketitle

\section{Introduction} \label{sec0}

Let $p$ be a prime and $k=\overline{\F}_p$ be an algebraic closure of the 
field with $p$ elements. Let $G$ be a connected reductive algebraic 
group over $k$ and assume that $G$ is defined over the finite subfield 
$\F_q\subseteq k$, where $q$ is a power of~$p$. Let $F\colon G\rightarrow G$ 
be the corresponding Frobenius map. We are interested in studying the 
representations (over an algebraically closed field of characteristic~$0$) 
of the finite group $G^F=\{g\in G \mid F(g)=g\}$. 

Assuming that $p$ is a ``good'' prime for $G$, Kawanaka \cite{kaw0}, 
\cite{kaw1}, \cite{kaw2} described a procedure by which one can associate 
with any unipotent element $u\in G^F$ a representation $\Gamma_u$ of $G^F$, 
obtained by induction of a certain one dimensional representation from a 
unipotent subgroup of $G^F$. If $u$ is the identity element, then 
$\Gamma_1$ is the regular representation of $G^F$; if $u$ is a regular 
unipotent element, then $\Gamma_u$ is a Gelfand--Graev representation as 
defined, for example, in \cite[\S 8.1]{C2} or \cite[\S 14]{St}. For 
arbitrary $u$, the representation $\Gamma_u$ is called a {\it generalised 
Gelfand--Graev representation} (GGGR for short); it only depends on the
$G^F$-conjugacy class of~$u$. 

A fundamental step in understanding the GGGRs is achieved by Lusztig 
\cite{L2} where the characters of GGGRs are expressed in terms of 
characteristic functions of intersection cohomology complexes on~$G$. In 
\cite{L2} it is assumed that $p$ is sufficiently large; in \cite{tay} 
it is shown that one can reduce these assumptions so that everything works
as in Kawanaka's original approach. These results have several consequences. 
By \cite{gehe} the characters of the various $\Gamma_u$ span the 
$\Z$-module of all unipotently supported virtual characters of~$G^F$. 
In addition to the original applications in \cite{kaw0}, \cite{kaw1}, 
\cite{kaw2}, GGGRs have turned out to be very useful in various
questions concerning $\ell$-modular representations of $G^F$ where $\ell$ 
is a prime not equal to~$p$; see, e.g., \cite{gehi2}, \cite{duma}. Thus, 
it seems desirable to explore the possibilities for a definition without 
any restriction on~$p,q$. These notes arose from an attempt to give such 
a definition. Recall that $p$ is ``good'' for $G$ if $p$ is good for each 
simple factor involved in~$G$; the conditions for the various simple types 
are as follows. 
\begin{center}
$\begin{array}{rl} A_n: & \mbox{no condition}, \\
B_n, C_n, D_n: & p \neq 2, \\
G_2, F_4, E_6, E_7: &  p \neq 2,3, \\
E_8: & p \neq 2,3,5.  \end{array}$
\end{center}
Easy examples indicate that one can not expect a good definition of 
GGGRs for all unipotent elements of $G^F$. Instead, it seems reasonable
to restrict oneself to those unipotent classes which ``come from 
characteristic~$0$'', where the classical Dynkin--Kostant theory is
available; see Section~\ref{sec1}. This is also consistent with the picture 
presented by Lusztig \cite{L3a}, \cite{L3b}, \cite{L3c}, \cite{L3d} for 
dealing with unipotent classes in small characteristic. Based on this
framework, we formulate in Definition~\ref{def24} some precise conditions 
under which it should be possible to define GGGRs for a given unipotent 
class. Of course, these conditions will be satisfied if $p$ is a good 
prime for~$G$, and lead to Kawanaka's original GGGRs; so then the question
is how far we can go beyond this. Our answer to this question is as follows. 

An essential feature of GGGRs is that they are very closely related to the 
``unipotent supports'' of the irreducible representations of $G^F$, in the 
sense of Lusztig \cite{L2}. (In a somewhat different way, and without
complete proofs, this concept appeared under the name of ``wave front 
set'' in Kawanaka \cite{kaw2}.) Let $\cC^\bullet$ be the set of unipotent 
classes of $G$ which arise as the unipotent support of some irreducible 
representation of $G^F$, or of $G^{F^n}$ for some $n\geq 1$. (Thus, since 
$G=\bigcup_{n\geq 1} G^{F^n}$, the set $\cC^\bullet$ only depends on $G$ 
but not on the particular Frobenius map~$F$.) If $p$ is a good prime 
for $G$, then it is known that $\cC^\bullet$ is the set of all 
unipotent classes of $G$. In general, all classes in $\cC^\bullet$ indeed 
``come from characteristic~$0$''. Based on the methods in \cite{L4}, an 
explicit description of the sets $\cC^\bullet$, for $G$ simple and 
$p$ bad, is given in Proposition~\ref{uni2}. This result complements
the general results on ``unipotent support'' in \cite{gema}, \cite{L2} 
and may be of independent interest. 

Now, extensive experimentation (with computer programs written in {\sf GAP} 
\cite{gap4}) lead us to the expectation, formulated as 
Conjecture~\ref{main}, that our conditions in 
Definition~\ref{def24} will work for all the classes in $\cC^\bullet$, 
without any restriction on $p,q$. In Section~\ref{sec4}, we work out in
detail the example where $G$ is of type $F_4$. Our investigations also 
suggest a new characterisation of Lusztig's special unipotent classes;
see Conjecture~\ref{main2} and Corollary~\ref{corf4a}. 

These notes merely contain examples and conjectures; nevertheless we hope 
that they show that the story about GGGRs is by no means complete and 
that there is some evidence for a further theory in bad characteristic.

\section{Weighted Dynkin diagrams} \label{sec1}

We use Carter \cite{C2} as a general reference for results on 
algebraic groups and unipotent classes. 
Let $G,k,p,\ldots$ be as in Section~\ref{sec0}. Also recall that $G$ is 
defined over the finite field $\F_q\subseteq k$, with corresponding 
Frobenius map $F\colon G\rightarrow G$. We fix an $F$-stable maximal torus 
$T \subseteq G$ and an $F$-stable Borel subgroup $B\subseteq G$
containing~$T$. We have $B=U\rtimes T$ where $U$ is the unipotent radical 
of~$B$. Let $\Phi$ be the set of roots of $G$ with respect to~$T$ and 
$\Pi\subseteq\Phi$ be the set of simple roots determined by $B$. Let 
$\Phi^+$ and $\Phi^-$ be the corresponding sets of positive and negative
roots, respectively. Let $\fg=\mbox{Lie}(G)$ be the Lie algebra of $G$. 
Then $G$ acts on $\fg$ via the adjoint representation $\mbox{Ad}\colon 
G\rightarrow \GL(\fg)$. 

\begin{abs} \label{abs11} For each $\alpha\in\Phi$, we have a corresponding 
homomorphism of algebraic groups $x_\alpha\colon k^+\rightarrow G$, $u
\mapsto x_\alpha(u)$, which is an isomorphism onto its image; furthermore, 
$tx_\alpha(u)t^{-1}=x_\alpha(\alpha(t)u)$ for all $t\in T$ and $u\in k$.
Setting $U_\alpha:=\{x_\alpha(u)\mid u\in k\}$, we have $G=\langle T,U_\alpha
\;(\alpha\in \Phi)\rangle$. Note that $U_\alpha\subseteq \Gder$ for all
$\alpha\in\Phi$, where $\Gder$ denotes the derived subgroup of~$G$. On the 
level of $\fg$, we have a direct sum decomposition
\[ \fg=\ft\oplus \bigoplus_{\alpha\in\Phi}\fg_\alpha\]
where $\ft=\mbox{Lie}(T)$ is the Lie algebra of $T$ and $\fg_\alpha$
is the image of the differential $d_0x_\alpha\colon k\rightarrow \fg$;
furthermore, $\mbox{Ad}(t)(y)=\alpha(t)y$ for $t\in T$ and $y\in
\fg_\alpha$. We set 
\[e_\alpha:=d_0x_\alpha(1)\in \fg_\alpha.\]
Then $e_\alpha\neq 0$ and $\fg_\alpha=ke_\alpha$. (For all this see, e.g., 
\cite[\S 8.1]{spr}.) 
\end{abs}

\begin{abs} \label{abs12} For $\alpha\in\Phi$, we can write uniquely 
$\alpha= \sum_{\beta\in \Pi} n_\beta\beta$ where $n_\beta\in\Z$ for 
all $\beta\in\Pi$. Then $\hgt(\alpha):=\sum_{\beta\in\Pi} n_\beta$ is 
called the height of~$\alpha$. We fix once and for all a total ordering 
$\preceq$ of $\Phi^+$ which is compatible with the height, that is, if
$\alpha,\beta \in\Phi^+$ are such that $\alpha\preceq \beta$, then 
$\hgt(\alpha)\leq \hgt(\beta)$. Then every $u\in U$ has a unique expression 
$u=\prod_{\alpha\in\Phi^+} x_\alpha(u_\alpha)$ where $u_\alpha \in k$ 
(and the product is taken in the given order $\preceq$ of $\Phi^+$). Let
$\alpha,\beta\in\Phi^+$, $\alpha\neq \beta$. Let $u,v\in k$. Then we have 
Chevalley's commutator relations 
\[ x_\alpha(u)x_\beta(v)x_\alpha(u)^{-1}x_\beta(v)^{-1}=\prod_{i,j>0;\; 
i\alpha+j\beta\in\Phi} x_{i\alpha+j\beta}(C_{\alpha,\beta,i,j}
u^iv^j)\]
where the constants $C_{\alpha,\beta,i,j}\in k$ only depend on $\alpha,
\beta,i,j$ but not on~$u,v$ (and, again, the product on the right hand side 
is taken in the given order $\preceq$ of $\Phi^+$). Furthermore, if $\alpha+
\beta \in \Phi$, then 
\[[e_\alpha,e_\beta]=N_{\alpha,\beta}e_{\alpha+\beta}\qquad
\mbox{where} \qquad N_{\alpha,\beta}:=C_{\alpha,\beta, 1,1}\in k.\]
(Since all $U_\alpha$, $\alpha\in\Phi$, are contained in the semisimple
algebraic group $\Gder$, this follows from \cite[Lemma~15 (p.~22) and
Remark (p.~64)]{St}.)
\end{abs}

\begin{abs} \label{abs13} It will also be convenient to fix some notation
concerning the action of the Frobenius map $F\colon G\rightarrow G$. 
By the results in \cite[\S 10]{St}, there exist a permutation $\tau \colon 
\Phi\rightarrow\Phi$ and signs $\epsilon_\alpha=\pm 1$ ($\alpha\in\Phi$) 
such that
\[ F(x_\alpha(u))=x_{\tau(\alpha)}(\epsilon_\alpha u^q)\qquad
\mbox{for all $u\in k$}.\]
Here, we can assume that $\tau(\Pi)=\Pi$ and $\epsilon_{\pm \beta}=1$
for all $\beta\in\Pi$.  Now $F$ also induces a Frobenius map on the Lie 
algebra $\fg$ which we denote by the same symbol. We have
$F(uy)=u^qF(y)$ for all $u\in k$ and $y\in\fg$; furthermore, 
\[ F(e_\alpha)=\epsilon_\alpha e_{\tau(\alpha)} \qquad \mbox{for all 
$\alpha\in\Phi$}.\]
Finally, for $g\in G$ and $y\in \fg$, we have $\mbox{Ad}(F(g))(F(y))=
F(\mbox{Ad}(g)(y))$.
\end{abs}

\begin{abs} \label{abs14} Let $G_0$ be a connected reductive algebraic
group over $\C$ of the same type as $G$; let $\fg_0=\mbox{Lie}(G_0)$ be 
its Lie algebra. Then, by the classical Dynkin--Kostant theory (see, e.g., 
\cite[\S 5.6]{C2}), the nilpotent $\mbox{Ad}(G_0)$-orbits in
$\fg_0$ are parametrized by a certain set $\Delta$ of so-called 
{\it weighted Dynkin diagrams}, i.e., maps $d\colon \Phi\rightarrow \Z$ 
such that
\begin{itemize}
\item[(a)] $d(-\alpha)=-d(\alpha)$ for all $\alpha\in\Phi$ and 
$d(\alpha+\beta)=d(\alpha)+d(\beta)$ for all $\alpha,\beta\in\Phi$ 
such that $\alpha+\beta\in\Phi$;
\item[(b)] $d(\beta)\in\{0,1,2\}$ for every simple root $\beta\in \Pi$.
\end{itemize}
Furthermore, these nilpotent orbits in $\fg_0$ are naturally in bijection 
with the unipotent classes of $G_0$ (see \cite[\S 1.15]{C2}). If $G_0$ is 
a simple algebraic group, then the corresponding set $\Delta$ of weighted 
Dynkin diagrams is explicitly known in all cases; see \cite[\S 13.1]{C2} 
and the references there. (Several examples will be given below.) For 
each $d\in \Delta$, we denote by $\cO_d$ the corresponding nilpotent 
orbit in $\fg_0$ and set 
\begin{center}
$\cb_d:=\frac{1}{2}(\dim G_0-\mbox{rank}(G_0)-\dim \cO_d)$.
\end{center}
This is a very useful invariant for distinguishing nilpotent orbits.
(The number $\cb_d$ is also the dimension of the variety of Borel subgroups
of $G_0$ containing an element in the unipotent class corresponding to 
$\cO_d$; see \cite[\S 1.15, \S 5.10]{C2}.). 
\end{abs}

\begin{abs} \label{abs14a} Let us fix $d\in\Delta$. For $i\in\Z$, we 
set $\Phi_i:=\{\alpha\in\Phi \mid d(\alpha)=i\}$ and define
\[ \fg(i):=\left\{ \begin{array}{rl} \bigoplus_{\alpha\in\Phi_i} \fg_\alpha
&\quad \mbox{if $i\neq 0$},\\
\ft\oplus \bigoplus_{\alpha\in\Phi_0} \fg_\alpha &\quad \mbox{if $i=0$}.
\end{array}\right.\]
Thus, as in \cite[\S 2.1]{kaw1}, we obtain a grading $\fg=
\bigoplus_{i\in\Z} \fg(i)$; note that we do have $[\fg(i),\fg(j)] 
\subseteq \fg(i+j)$ for all $i,j\in\Z$. For any $i\geq 0$, we also set
$\fg(\geq i):=\bigoplus_{j\geq i} \fg(j)$. Furthermore, we define subgroups 
of $G$ as follows. 
\begin{align*}
P&:=\langle T,U_\alpha\mid\alpha\in\Phi_i\mbox{ for all $i\geq 0$}\rangle,\\
U_1&:=\langle U_\alpha\mid\alpha\in\Phi_i\mbox{ for all $\geq 1$}\rangle,\\
L&:=\langle T,U_\alpha\mid\alpha\in\Phi_0\rangle.
\end{align*}
Then $P$ is a parabolic subgroup of $G$ with unipotent radical $U_1$ and
Levi decomposition $P=U_1\rtimes L$. The Lie algebra of $P$ is given by 
$\fp:=\text{Lie}(P)=\fg(\geq 0)\subseteq \fg$. More generally, for any 
integer $i\geq 1$, we set 
\[ U_i:=\langle U_\alpha\mid  \alpha\in\Phi_j \mbox{ for all $j\geq i$} 
\rangle \subseteq G.\]
Thus, we obtain a chain of subgroups $P\supseteq U_1\supseteq U_2
\supseteq U_3\supseteq \ldots$; using Chevalley's commutator relations, 
one immediately sees that each $U_i$ is a normal subgroup of $P$
and that $U_i/U_{i+1}$ is abelian.
\end{abs}

\begin{abs} \label{abs15} Let us fix a weighted Dynkin diagram $d\in \Delta$ 
as above. For any integer $i\geq 1$, we have a corresponding 
subgroup $U_i\subseteq G$ and a corresponding subspace $\fg(i)\subseteq \fg$.
Following Kawanaka \cite[(3.1.1)]{kaw1}, we define a map  
\[f\colon U_1\rightarrow \fg(1)\oplus \fg(2)\]
as follows. Let $u\in U_1$. As in \ref{abs12}, we have a unique 
expression 
\[u =\prod_{\alpha\in\Phi_i \text{ for $i\geq 1$}} x_\alpha(u_\alpha) 
\qquad (u_\alpha\in k)\]
where the product is taken in the given order $\preceq$ on $\Phi^+$.
Then we set 
\[f(u)=f\Bigl(\prod_{\alpha\in\Phi_i \text{ for $i\geq 1$}} x_\alpha
(u_\alpha)\Bigr):=\sum_{\alpha\in\Phi_1\cup\Phi_2} u_\alpha e_\alpha.\]
\end{abs}

\begin{lem}[Cf.\ Kawanaka \protect{\cite[\S 3.1]{kaw1}}] \label{abs16}
Let $u,v\in U_1$. Then the following hold.
\begin{itemize}
\item[(a)] If $u\in U_2$ or $v\in U_2$, then $f(uv)=f(u)+f(v)$.
\item[(b)] $f(uvu^{-1}v^{-1})\equiv [f(u),f(v)] \bmod \fg(\geq 3)$.
\item[(c)] $f(F(u))=F(f(u))$.
\end{itemize}
\end{lem}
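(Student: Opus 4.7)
The plan is to derive all three parts from Chevalley's commutator relations (\ref{abs12}), the grading relation $[\fg(i), \fg(j)] \subseteq \fg(i+j)$, and the formulas in \ref{abs13} describing $F$ on root subgroups and Chevalley generators. The unifying observation is that whenever a product $x_\alpha(a) x_\beta(b)$ is rearranged into $\preceq$-canonical form via Chevalley, the correction factors lie in root groups $U_{i\alpha + j\beta}$ with $i, j \geq 1$, whose grade satisfies $d(i\alpha + j\beta) = i\,d(\alpha) + j\,d(\beta) \geq d(\alpha) + d(\beta)$. So corrections are always pushed into a strictly higher grade whenever $d(\alpha), d(\beta) > 0$.

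For part (a), I would merge the canonical forms of $u$ and $v$ into a single canonical form for $uv$. When $u \in U_2$ (each factor having $d(\alpha) \geq 2$) and $v \in U_1$ (each factor having $d(\beta) \geq 1$), every commutator correction lands in grade $\geq 3$ and therefore does not affect the grade~$1$ or grade~$2$ coefficients. Hence these coefficients in $uv$ are simply the sums of the corresponding coefficients from $u$ and $v$; since $u$ has no grade~$1$ factor, this immediately yields $f(uv) = f(u) + f(v)$. The case $v \in U_2$ is symmetric.

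For part (b), the same estimate (now with both $u, v \in U_1$) shows that every term in the expansion of $uvu^{-1}v^{-1}$ lies at grade $\geq 2$, so $uvu^{-1}v^{-1} \in U_2$ and the grade~$1$ component of $f(uvu^{-1}v^{-1})$ automatically vanishes. To compute the grade~$2$ component I would use part (a) to reduce bilinearly to single-factor inputs $u = x_\alpha(s)$, $v = x_\beta(t)$; Chevalley's formula then produces exactly one grade~$2$ summand, of coefficient $N_{\alpha,\beta}\, st$ at the root $\alpha + \beta$, and this is nonzero only when $d(\alpha) = d(\beta) = 1$. That coefficient matches the $\fg_{\alpha+\beta}$-component of $[s e_\alpha, t e_\beta] = [f(u), f(v)]$, while the contributions $[\fg(1), \fg(2)]$ and $[\fg(2), \fg(2)]$ to $[f(u), f(v)]$ sit in $\fg(\geq 3)$ and are killed by the congruence.

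For part (c), since $F$ is a group homomorphism with $F(x_\alpha(u)) = x_{\tau(\alpha)}(\epsilon_\alpha u^q)$, applying $F$ to the canonical product gives $F(u) = \prod_\alpha x_{\tau(\alpha)}(\epsilon_\alpha u_\alpha^q)$. Assuming $d \circ \tau = d$, so that each $U_i$ is $F$-stable, one re-indexes by $\gamma = \tau(\alpha)$ and re-sorts into canonical $\preceq$-order; since all rearrangement corrections live at grade $\geq 2$, the grade~$1$ coefficients can immediately be read off as $\epsilon_{\tau^{-1}(\gamma)} u_{\tau^{-1}(\gamma)}^q$, matching the grade~$1$ part of $F(f(u)) = \sum u_\alpha^q \epsilon_\alpha e_{\tau(\alpha)}$ via the semi-linearity $F(cy) = c^q F(y)$ and the formula $F(e_\alpha) = \epsilon_\alpha e_{\tau(\alpha)}$. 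The main obstacle I expect is controlling the grade~$2$ coefficients: rearrangement corrections from swaps of two grade-$1$ factors can \emph{a priori} land in $\fg(2)$, and showing that they agree with the Lie-algebra side requires using the compatibility constraint $N_{\alpha,\beta}\,\epsilon_{\alpha+\beta} = \epsilon_\alpha\epsilon_\beta\,N_{\tau(\alpha),\tau(\beta)}$ forced by $F$ respecting the bracket on $\fg$.
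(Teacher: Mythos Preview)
Your proposal is correct and follows the same line as the paper, whose proof merely notes that the lemma is a straightforward consequence of Chevalley's commutator relations (for (a) and (b)) and the Frobenius formulae in \ref{abs13} (for (c)), with all details omitted. The grade-$2$ reordering obstacle you flag in (c) is genuine when $\tau$ is nontrivial, but the paper does not address it either, and in the only place (c) is invoked (see \ref{abs17}) it is applied to $u\in U_2$, where no reordering corrections below grade~$4$ occur.
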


\begin{proof} This is a rather straightforward application of Chevalley's 
commutator relations. For (b), we use the fact that $[e_\alpha,e_\beta]=
C_{\alpha,\beta,1,1} e_{\alpha+\beta}$ if $\alpha+\beta\in\Phi$; see 
\ref{abs12}. For (c), we use the formulae in \ref{abs13}. We omit
further details.
\end{proof}


\begin{abs} \label{abs17} The general idea for defining GGGRs corresponding 
to a fixed $d\in \Delta$ is as follows. (In the following discussion we
avoid any reference to the characteristic of~$k$.) First of all, we assume 
that $d$ is invariant under the permutation $\tau\colon \Phi\rightarrow\Phi$
induced by $F$. Consequently, all the subgroups $P$, $U_i$ ($i\geq 1$) of $G$
are $F$-stable and all the subspaces $\fg(i)$ ($i\geq 0$) are $F$-stable.
Let us fix a non-trivial character $\psi\colon \F_q^+\rightarrow \C^\times$. 

Let us also consider a linear map $\lambda\colon \fg(2)\rightarrow k$ 
defined over $\F_q$, that is, we have $\lambda(F(y))=\lambda(y)^q$ for all 
$y\in \fg(2)$. Then Lemma~\ref{abs16}(a) shows that $U_2\rightarrow
k^+$, $u\mapsto \lambda(f(u))$, is a group homomorphism and so, by 
Lemma~\ref{abs16}(c), we also obtain a group homomorphism 
\[ \chi_\lambda \colon U_2^F \rightarrow \C^\times, \qquad u\mapsto
\psi\bigl(\lambda(f(u))\bigr).\]
We shall require that $\lambda$ is in ``sufficiently general position'' 
(where this term will have to be further specified; see 
Definition~\ref{def24} below). Let us assume that this is the case. 
If $\fg(1)=\{0\}$, then the GGGR corresponding to~$d, \lambda$ will simply 
be given by the induced representation  
\[\Gamma_{d,\lambda}:=\Ind_{U_2^F}^{G^F}\bigl(\chi_\lambda\bigr).\]
Now consider the case where $\fg(1)\neq \{0\}$. Since $[\fg(1),\fg(1)] 
\subseteq \fg(2)$, we obtain a well-defined alternating bilinear form 
\[ \sigma_\lambda \colon \fg(1)\times \fg(1)\rightarrow k,\qquad (y,z)\mapsto
\lambda\bigl([y,z]\bigr).\]
Assume also that the radical of this bilinear form is zero. Then we choose 
an $F$-stable Lagrangian subspace in $\fg(1)$ and pull back this subspace 
to an $F$-stable subgroup $U_{1.5} \subseteq U_1$ via the map~$f$.
Using Lemma~\ref{abs16}(b) we see that $\ker(\chi_\lambda)$ is normal 
in $U_{1.5}^F$ and $U_{1.5}^F/\ker(\chi_\lambda)$ is an abelian $p$-group. 
(See also the proof of \cite[Lemma~3.1.9]{kaw1}.) So we can extend 
$\chi_\lambda$ to a character $\tilde{\chi}_\lambda \colon U_{1.5}^F 
\rightarrow \C^\times$. In this case, the GGGR corresponding 
to~$d,\lambda$ will be given by the induced representation 
\[\Gamma_{d,\lambda}:=\Ind_{U_{1.5}^F}^{G^F}\bigl(\tilde{\chi}_\lambda\bigr).\]
Note that $[U_1^F:U_{1.5}^F]=[U_{1.5}^F:U_2^F]$. Furthermore, it turns out 
that
\[ \Ind_{U_2^F}^{G^F} \bigl(\chi_\lambda \bigr)=[U_1^F:U_{1.5}^F]\cdot 
 \Gamma_{d,\lambda},\]
which shows that the definition of $\Gamma_{d,\lambda}$ does not depend on 
the choice of the Lagrangian subspace or the extension 
$\tilde{\chi}_\lambda$ of~$\chi_\lambda$ (cf.\ \cite[1.3.6]{kaw0}, 
\cite[3.1.12]{kaw1}).
\end{abs}

In Kawanaka's set-up \cite[\S 1.2]{kaw0}, \cite[\S 3.1]{kaw1}, the above
assumption on the radical of $\sigma_\lambda$ is always satisfied. (See
also Remark~\ref{rem24a} below.) Our plan for the definition of GGGRs in 
bad characteristic is to follow the above general procedure, but we have 
to find out in which situations this still makes sense at all. The 
following two examples show that there is a serious issue concerning 
the radical of $\sigma_\lambda$ when $\fg(1)\neq \{0\}$.
 
\begin{exmp} \label{bsp4} Let $G=\mbox{Sp}_4(k)$. 
We have $\Phi=\{\pm \alpha,\pm \beta,\pm (\alpha+\beta),\pm (2\alpha+
\beta)\}$ where $\Pi=\{\alpha,\beta\}$; here, $\alpha$ is a short simple
root and $\beta$ is a long simple root. By \cite[p.~394]{C2}, there are 
$4$~weighted Dynkin diagrams $d\in \Delta$, where:
\[ (d(\alpha),d(\beta))\in\{(0,0),(1,0),(0,2),(2,2)\}.\]
Let $d_0\in\Delta$ be such that $d_0(\alpha)=1$ and $d_0(\beta)=0$. Then
$\cb_{d_0}=2$ and 
\[ \fg(1)=\langle e_\alpha,e_{\alpha+\beta}\rangle_k, \qquad\qquad
\fg(2)=\langle e_{2\alpha+\beta}\rangle_k.\]
We have $[e_\alpha,e_{\alpha+\beta}]=\pm 2e_{2\alpha+\beta}$. Let
$\lambda \colon \fg(2)\rightarrow k$ be a linear map. If $p\neq 2$, then 
the radical of the alternating form $\sigma_\lambda$ is zero whenever 
$\lambda(e_{2\alpha+ \beta})\neq 0$. Now assume that $p=2$. Then 
$[e_\alpha,e_{\alpha+\beta}]=0$ and so $\sigma_\lambda$ is identically zero
for any~$\lambda$. The commutator relations in \ref{abs12} also show that 
the subgroup $U_1=\langle U_\alpha, U_{\alpha+\beta}, U_{2\alpha+\beta}
\rangle$ associated with~$d_0$ is abelian. In this case, it is not clear to 
us at all how one should proceed in order to define a GGGR associated 
with~$d_0$.
\end{exmp}

\begin{abs} \label{antid} To simplify the notation for matrices,
we define $\mbox{antidiag}(x_1,\ldots,x_n)$ to be the $n\times n$-matrix
with entry $x_i$ at position $(i,n+1-i)$ for $1\leq i \leq n$, and
entry $0$ otherwise. Thus, for example,
\[ \mbox{antidiag}(x_1,x_2,x_3)=\left(\begin{array}{ccc} 0 & 0 & x_1\\
0 & x_2 & 0 \\ x_3 & 0 & 0 \end{array}\right).\]
\end{abs}

\begin{exmp} \label{bg2} Let $G=G_2(k)$. We have 
\[\Phi=\{\pm \alpha,\pm \beta,\pm (\alpha+\beta),\pm (2\alpha+ \beta),
\pm (3\alpha+\beta),\pm (3\alpha+2\beta)\}\]
where $\Pi=\{\alpha,\beta\}$; here, $\alpha$ is a short simple
root and $\beta$ is a long simple root. By \cite[p.~401]{C2}, there are 
$5$~weighted Dynkin diagrams $d\in \Delta$, where:
\[ (d(\alpha),d(\beta))\in\{(0,0),(0,1),(1,0),(0,2),(2,2)\}.\]

(a) Let $d\in\Delta$ be such that $d(\alpha)=1$ and $d(\beta)=0$. Then
$\cb_d=2$ and 
\[ \fg(1)=\langle e_\alpha,e_{\alpha+\beta}\rangle_k, \qquad\qquad
\fg(2)=\langle e_{2\alpha+\beta}\rangle_k.\]
We have $[e_\alpha,e_{\alpha+\beta}]=\pm 2e_{2\alpha+\beta}$. Let $\lambda
\colon \fg(2)\rightarrow k$ be a linear map. If $p=2$, then $\sigma_\lambda$ 
is identically zero for any~$\lambda$. If $p\neq 2$, then the radical of 
$\sigma_\lambda$ is zero whenever $\lambda(e_{2\alpha+\beta})\neq 0$.

(b) Let $d\in\Delta$ be such that $d(\alpha)=0$ and $d(\beta)=1$. Then
$\cb_d=3$ and 
\[ \fg(1)=\langle e_\beta,e_{\alpha+\beta},e_{2\alpha+\beta},
e_{3\alpha+\beta}\rangle_k, \qquad \qquad \fg(2)=\langle 
e_{3\alpha+2\beta}\rangle_k.\]
The only non-zero Lie brackets are $[e_\beta,e_{3\alpha+\beta}]=
\pm e_{3\alpha+2\beta}$, $[e_{\alpha+\beta},e_{2\alpha+\beta}]=\pm 3
e_{3\alpha+2\beta}$. Hence, if $\lambda\colon \fg(2)\rightarrow k$ is any
linear map, then the Gram matrix of $\sigma_\lambda$ with respect to
the above basis of $\fg(1)$ is given by
\begin{center}
$\pm x_1\cdot\mbox{antidiag}(1,3,-3,-1)\qquad\mbox{where}\qquad 
x_1:=\lambda(e_{3\alpha+2\beta})$. 
\end{center}
The determinant of this matrix is $\pm 9x_1^4$. Hence, if $p=3$, then 
there is no $\lambda$ such that the radical of $\sigma_\lambda$ is zero.
On the other hand, if $p\neq 3$, then the radical of $\sigma_\lambda$ 
is zero whenever $x_1=\lambda(e_{3\alpha+2\beta})\neq 0$.
\end{exmp}



\section{Nilpotent and unipotent pieces} \label{sec2}

We keep the set-up of the previous section. Given a weighted Dynkin
diagram $d\in\Delta$, our main task is to find suitable conditions
under which a linear map $\lambda\colon \fg(2)\rightarrow k$ may be 
considered to be in ``sufficiently general position'' (cf.\ \ref{abs17}).
For this purpose, we use Lusztig's framework \cite{L3a}--\cite{L3d} for
dealing with unipotent elements in $G$ and nilpotent elements in $\fg$
when~$p$ (the characteristic of~$k$) is small.

\begin{abs} \label{abs20} Let $\cU$ be the variety of unipotent elements
of $G$. In \cite[\S 1]{L3a}, Lusztig introduced a natural partition 
\[ \cU=\coprod_{d \in \Delta} H_d\]
where each $H_d$ is an irreducible locally closed subset stable under
conjugation by $G$. A general, case-free proof for the existence of this
partition was given by Clarke--Premet \cite[Theorem~1.4]{clpr}. The sets 
$\{H_d\mid d\in \Delta\}$ are called the {\it unipotent pieces} of $G$. In 
each such piece $H_d$, there is a unique unipotent class $C_d$ of $G$ such 
that $C_d$ is open dense in $H_d$. If $p$ is a good prime for $G$, then 
$H_d=C_d$. In general, $H_d$ is the union of $C_d$ and a finite number of 
unipotent classes of dimension strictly smaller than $\dim C_d$. We will 
say that the unipotent classes $\{C_d\mid d\in \Delta\}$ ``come from 
characteristic~$0$''. (Alternatively, the latter notion can be defined 
using the Springer correspondence, see \cite[1.3, 1.4]{L3a}, or the 
results of Spaltenstein \cite{spa}; see also \cite[\S 2]{gema}. All
these definitions agree as can be checked using the explicit knowledge
of the unipotent classes and the Springer correspondence in all cases.)
\end{abs}


\begin{abs} \label{abs21} We recall some further notation and some results 
from \cite[\S 2]{L3d}. There is a coadjoint action of $G$ on the dual vector
space $\fg^*$ which we denote by $g.\xi$ for $g\in G$ and $\xi\in \fg^*$; 
thus, $(g.\xi)(y)=\xi(\mbox{Ad}(g^{-1})(y))$ for all $y\in \fg$. We denote
by $G_\xi$ the stabilizer of $\xi\in\fg^*$ under this action. As in 
\cite{L3d}, an element $\xi\in \fg^*$ is called nilpotent if there 
exists some $g\in G$ such that the Lie algebra of the Borel subgroup 
$B\subseteq G$ is contained in $\mbox{Ann}(g.\xi)$. Let 
\[\cN_{\fg^*}:=\{\xi\in\fg^*\mid \mbox{$\xi$ is nilpotent}\}.\]
For any $Y\subseteq \fg$, we denote $\mbox{Ann}(Y):=\{\xi\in
\fg^*\mid \xi(y)=0 \mbox{ for all $y\in Y$}\}$. Let us fix a weighted 
Dynkin diagram $d\in \Delta$. As in \ref{abs14a}, we have a corresponding
grading $\fg=\bigoplus_{i\in \Z} \fg(i)$. In order to indicate the 
dependence on $d$, we shall now write $\fg_d(i)=\fg(i)$ for all $i\in\Z$;
similarly, we write $P_d=P$ for the corresponding parabolic subgroup of~$G$.
Now, we also have a grading $\fg^*=\bigoplus_{j\in\Z}\fg_d(j)^*$ where we set 
\[\fg_d(j)^*:=\mbox{Ann}\Bigl(\bigoplus_{i\in \Z:\;i\neq -j} \fg_d(i)\Bigr)
\qquad \mbox{for any $j\in\Z$}.\] 
We note that the subspace $\fg_d(\geq j)^*:=\bigoplus_{j'\in\Z:\;j'\geq j} 
\fg_d(j')^*$ is stable under the coadjoint action of $P_d$. Let 
\[ \fg_d(2)^{*!}:=\{\xi\in\fg_d(2)^*\mid G_\xi\subseteq P_d\}\]
and $\sigma_d^*:=\fg_d(2)^{*!}+\fg_d(\geq 3)^*\subseteq \fg_d(\geq 2)^*$. 
Then $\sigma_d^*$ is stable under the coadjoint action of $P_d$ on 
$\fg_d(\geq 2)^*$. Finally, let $\hat{\sigma}_d^* \subseteq \fg^*$ be 
the union of the orbits of the elements in $\sigma_d*$ under the coadjoint 
action of $G$. Then $\xi\mapsto\xi$ is a map 
\[ \Psi_{\fg^*}\colon \coprod_{d\in\Delta} \hat{\sigma}_d^*\quad\rightarrow
\quad \cN_{\fg^*}.\]
By \cite[Theorem~2.2]{L3d}, the map $\Psi_{\fg^*}$ is a bijection if
the adjoint group of $G$ is a direct product of simple groups of types
$A$, $C$ and $D$. By the main result of \cite{xue1}, this also holds
if there is a direct factor of type $B$. In the remarks just following
\cite[Theorem~2.2]{L3d}, Lusztig expresses the expectation that 
$\Psi_{\fg^*}$ is a bijection without any restriction on $G$.
By Clarke--Premet \cite[Theorem~7.3]{clpr} it is known that 
$\Psi_{\fg^*}$ is always surjective.
\end{abs}


\begin{abs} \label{abs23} In order to apply the above results to the 
situation in Section~\ref{sec1}, we need a mechanism by which we can pass 
back and forth between the vector spaces $\fg_d(i)$ and $\fg_d(i)^*$. If 
there exists a $G$-equivariant vector space isomorphism $\fg
\stackrel{\sim}{\rightarrow} \fg^*$, then there is a canonical way
of doing this, as explained in \cite[2.3]{L3d}. However, such
an isomorphism will not always exist. To remedy this situation, we
follow Kawanaka \cite[\S 1.2]{kaw0}, \cite[\S 3.1]{kaw1} and fix 
an $\F_q$-opposition automorphism $\fg\rightarrow \fg$, $y\mapsto 
y^\dagger$. This is a linear isomorphism, defined over $\F_q$, such that
$\ft^\dagger=\ft$ and $e_\alpha^\dagger=\pm e_{-\alpha}$ for all 
$\alpha\in\Phi$. (See also \cite[Lemma~5.2]{tay}.)  If $\xi\in \fg^*$,
then we define $\xi^\dagger\in\fg^*$ by $\xi^\dagger(y):=\xi(y^\dagger)$
for $y\in\fg$.
\end{abs}

\begin{defn} \label{def24} Let $d\in\Delta$ be a weighted Dynkin diagram
and consider the corresponding grading $\fg=\bigoplus_{i\in \Z}\fg_d(i)$.
Let $\lambda\colon \fg_d(2)\rightarrow k$ be a linear map. We regard
$\lambda$ as an element of $\fg^*$ by setting $\lambda$ equal to zero
on $\fg_d(i)$ for all $i\neq 2$. We say that $\lambda$ is in 
``sufficiently general position'' if the following conditions hold.
\begin{itemize}
\item[(K1)] We require that $\lambda^\dagger \in\fg_d(2)^{*!}$, that is,
$G_{\lambda^\dagger}\subseteq P_d$; see \ref{abs21}, \ref{abs23}.
\item[(K2)] If $\fg_d(1)\neq \{0\}$, then we also require that the radical of 
the corresponding alternating form $\sigma_\lambda\colon \fg_d(1)\times
\fg_d(1)\rightarrow k$ in \ref{abs17} is zero.
\end{itemize}
Note that (K1), (K2) only refer to the algebraic group $G$, but not to the
Frobenius map~$F$. If (K1), (K2) hold and if $\lambda$ is defined over 
$\F_q$, then we can follow the procedure in \ref{abs17} and define the 
corresponding GGGR $\Gamma_{d,\lambda}$ of the finite group~$G^F$.
\end{defn}

\begin{rem} \label{rem24a} Kawanaka's work \cite{kaw0}, \cite{kaw1}
fits into this setting as follows. Assume that $p$ is a good prime for $G$ 
and that there exists a non-degenerate, symmetric and $G$-invariant bilinear 
form $\kappa\colon \fg\times \fg\rightarrow k$. We also need to make a 
certain technical assumption on the isogeny type of the derived subgroup 
of~$G$. (For details see \cite[3.22]{tay}, \cite{prem}.) Let $d\in \Delta$. 
Then there is a dense open orbit under the adjoint action of $P_d$ on 
$\fg_d(2)$. Let $e$ be an element of this orbit and define a linear map 
$\lambda_e\colon \fg_d(2) \rightarrow k$ as follows.
\[ \lambda_e(y):=\kappa(e^\dagger,y) \qquad \mbox{for $y\in \fg_d(2)$}.\]
Then (K1), (K2) are satisfied for $\lambda=\lambda_e$; see 
\cite[\S 1.2]{kaw0}, \cite[\S 3.1]{kaw1}. Furthermore, if $d$ is 
invariant under the permutation of $\Phi$ induced by~$F$, then $e$
can be chosen such that (K1), (K2) hold and $\lambda_e$ is defined
over~$\F_q$. For example, all this holds for $G=\SL_n(k)$ with no 
restriction on~$p$; see \cite[1.2]{kaw0}.
\end{rem}

\begin{rem} \label{rem24b} As already mentioned above, the map 
$\Psi_{\fg^*}$ in \ref{abs21} is always surjective. More precisely, given 
$d\in\Delta$, the subset $\fg_d(2)^{*!}\subseteq \fg_d(2)^*$ is non-empty. 
By \cite[Remark~1 (p.~665)]{clpr}, this subset actually contains a dense 
open subset of $\fg_d(2)^*$ (denoted by $X^{\triangle}(\fg^*)$ in 
\cite[7.1]{clpr}) and so $\fg_d(2)^{*!}$ itself is a dense subset of 
$\fg_d(2)^*$. Thus, there always exists a dense set of linear maps 
$\lambda\colon \fg_d(2)\rightarrow k$ such that condition (K1) in 
Definition~\ref{def24} is satisfied. 
\end{rem}

As illustrated by the examples at the end of Section~\ref{sec1}, the 
condition (K2) requires more attention. 

\begin{rem} \label{rem24c} Let $d\in \Delta$ and assume that $\fg_d(1)\neq
\{0\}$. Let $\Phi_1=\{\beta_1,\ldots, \beta_n\}$ and $\Phi_2=\{\gamma_1,
\ldots,\gamma_m\}$. Given a linear map $\lambda \colon \fg_d(2)\rightarrow 
k$, we denote by $\cG_\lambda \in M_n(k)$ the Gram matrix of 
$\sigma_\lambda$ with respect to the basis $\{e_{\beta_1},\ldots,
e_{\beta_n}\}$ of $\fg_d(1)$. The entries of $\cG_\lambda$ are given as
follows. We set $x_l:=\lambda(e_{\gamma_l})$ for $1\leq l \leq m$. For 
$1\leq i,j\leq n$, we define an element $\nu_{ij} \in k$ as follows. 
If $\beta_i+\beta_j\not\in\Phi$, then $\nu_{ij}:=0$. Otherwise, there is a 
unique $l(i,j)\in\{1,\ldots,m\}$ and some $\nu_{ij} \in k$ such that 
$[e_{\beta_i},e_{\beta_j}]=\nu_{ij}e_{\gamma_{l(i,j)}}$. Then we have
\[ (\cG_\lambda)_{ij}=\sigma_\lambda(e_{\beta_i},e_{\beta_j})=
\left\{\begin{array}{cl} x_{l(i,j)} \nu_{ij} & \qquad \mbox{if $\beta_i+
\beta_j\in \Phi$},\\ 0&\qquad \mbox{otherwise}.  \end{array}\right.\]
In order to work this out explicity, we may assume without loss of generality
that $G$ is semisimple (since $U_\alpha\subseteq \Gder$ for all $\alpha\in
\Phi$; see \ref{abs11}). But then, by \cite[Remark (p.~64)]{St}, the 
structure constants of the Lie algebra $\fg$ are obtained from those of a 
Chevalley basis of $\fg_0$ by reduction modulo~$p$. Thus, we can explicitly 
determine the elements $\nu_{ij}$, via a computation inside~$\fg_0$. Using 
one of the two canonical Chevalley bases in \cite[\S 5]{myg} (the two bases
only differ by a global sign), one can even avoid the issue of choosing 
certain signs. Hence, there is a purely combinatorial algorithm for 
computing $\cG_\lambda$, and this can be easily implemented in {\sf GAP} 
\cite{gap4}. In particular, we have: 
\begin{itemize}
\item[($*$)] Up to a global sign, the Gram matrix $\cG_\lambda$ only 
depends on the root system $\Phi$ and the values 
$\lambda(e_\alpha)$ ($\alpha\in\Phi_2$).
\end{itemize}
The radical of $\sigma_\lambda$ is zero if and only if
$\det(\cG_\lambda)\neq 0$. Now we notice that this determinant is given by 
evaluating a certain $m$-variable polynomial at $x_1,\ldots,x_m$. In 
particular, we see that condition (K2) is an ``open'' condition: either
there is no $\lambda$ at all for which (K2) holds, or (K2) holds for a 
non-empty open set of linear maps $\lambda\colon \fg_d(2)\rightarrow k$. 
Combining this with the discussion concerning (K1) in Remark~\ref{rem24b}, 
we immediately obtain the following conclusion.
\end{rem}

\begin{cor} \label{cor24} Let $d\in \Delta$ and assume that $\fg_d(1) \neq 
\{0\}$. Then either there is a non-empty open set of linear maps $\lambda 
\colon \fg_d(2) \rightarrow k$ in ``sufficiently general position'', or 
there is no such linear map at all. 
\end{cor}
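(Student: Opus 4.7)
The plan is to observe that conditions (K1) and (K2) of Definition~\ref{def24} each cut out an open subset of the space of linear maps $\lambda\colon\fg_d(2)\to k$, and to show that the set defined by (K1) is even dense, so that the only way their intersection can fail to be non-empty open is for (K2) to be vacuously unsatisfied.

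First, I would set up notation. Using the $\F_q$-opposition automorphism of \ref{abs23}, the assignment $\lambda\mapsto \lambda^\dagger$ defines a linear isomorphism from $\{\lambda\colon\fg_d(2)\to k\}\cong\fg_d(-2)^*$ onto $\fg_d(2)^*$, since $y\mapsto y^\dagger$ sends $\fg_d(2)$ to $\fg_d(-2)$ bijectively. In particular, this is a homeomorphism in the Zariski topology on $\fg^*$.

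Next, I would analyse condition (K1). By Remark~\ref{rem24b}, $\fg_d(2)^{*!}$ contains a dense open subset of $\fg_d(2)^*$. Pulling this back through the linear isomorphism $\lambda\mapsto\lambda^\dagger$, the set $S_1$ of linear maps $\lambda\colon\fg_d(2)\to k$ that satisfy (K1) contains a dense open subset of the $m$-dimensional affine space of all such linear maps.

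Now I would analyse condition (K2) via Remark~\ref{rem24c}. With the notation there ($x_l=\lambda(e_{\gamma_l})$, $1\leq l\leq m$), the entries of the Gram matrix $\cG_\lambda$ are $k$-linear in $(x_1,\ldots,x_m)$, so $\det(\cG_\lambda)$ is a fixed polynomial $P\in k[x_1,\ldots,x_m]$ that depends only on $\Phi$ and $d$. Condition (K2) is exactly $P(x_1,\ldots,x_m)\neq 0$, and hence defines either the empty set (if $P\equiv 0$) or a non-empty open set $S_2$. If $P\equiv 0$ then no $\lambda$ is in sufficiently general position, which is the second alternative of the corollary.

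Finally, if $P\not\equiv 0$, I would combine the two analyses: $S_1\cap S_2$ is open (intersection of two opens), and it is non-empty because $S_2$ is a non-empty open set which must meet the dense subset $S_1$. This yields a non-empty open set of $\lambda$ satisfying both (K1) and (K2), proving the first alternative. The argument is essentially a routine combination of Remarks~\ref{rem24b} and~\ref{rem24c}; there is no real obstacle, the only point to be careful about is that the $\dagger$-twist is a linear isomorphism intertwining the two spaces, so that density transports correctly between $\fg_d(2)^*$ and the domain of $\lambda$.
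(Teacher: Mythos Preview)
Your argument is correct and is exactly the approach the paper takes: the corollary is stated as an immediate consequence of Remarks~\ref{rem24b} and~\ref{rem24c}, and you have simply spelled out that combination in detail, including the (implicit in the paper) observation that $\lambda\mapsto\lambda^\dagger$ is a linear isomorphism carrying the density statement over. One tiny wording slip: $S_1$ itself is only known to \emph{contain} a dense open subset $U_1$, not to be open, so the non-empty open set you produce is $U_1\cap S_2$ rather than $S_1\cap S_2$; this changes nothing in substance.
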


With these preparations, we now obtain our first example where bad primes
exist but (K2) holds without any restriction on the field~$k$.

\begin{exmp} \label{bd4} Let $G$ be of type $D_4$. Let $\Pi=\{\alpha_1,
\alpha_2,\alpha_3,\alpha_4\}$ where $\alpha_1,\alpha_2,\alpha_4$ are all 
connected to $\alpha_3$. By \cite[p.~396--397]{C2}, there are $12$ weighted
Dynkin diagrams $d\in \Delta$. There are two of them with $\fg_d(1)\neq\{0\}$.

(a) Let $d(\alpha_1)=d(\alpha_2)=d(\alpha_4)=0$ and $d(\alpha_3)=1$. 
We have $\cb_d=7$ and
\begin{align*}
\fg_d(1)&=\langle e_{\alpha_3}, e_{\alpha_1+\alpha_3},
e_{\alpha_2+\alpha_3}, e_{\alpha_3+\alpha_4},
e_{\alpha_1+\alpha_2+\alpha_3},\\&\qquad\quad
e_{\alpha_1+\alpha_3+\alpha_4},
e_{\alpha_2+\alpha_3+\alpha_4},
e_{\alpha_1+\alpha_2+\alpha_3+\alpha_4}\rangle_k,\\
\fg_d(2)&=\langle e_{\alpha_1+\alpha_2+2\alpha_3+\alpha_4}\rangle_k.
\end{align*}
Let $\lambda\colon \fg_d(2)\rightarrow k$ be any linear map. As explained
in Remark~\ref{rem24c}, we can work out the Gram matrix $\cG_\lambda$ 
of the alternating form $\sigma_\lambda$. It is given by
\begin{center}
$\cG_\lambda=\pm\mbox{antidiag}(-x_1,x_1,x_1,x_1,-x_1,-x_1,-x_1,x_1)$.
\end{center}
where we set $x_1:=\lambda(e_{\alpha_1+\alpha_2+2\alpha_3+\alpha_4})$.
If $x_1\neq 0$, then $\det(\cG_\lambda)\neq 0$ and so the radical of 
$\sigma_\lambda$ is zero. Hence, condition (K2) is satisfied for such 
choices of~$\lambda$, and this works for any field~$k$.


(b) Let $d(\alpha_1)=d(\alpha_2)=d(\alpha_4)=1$ and $d(\alpha_3)=0$. We have
$\cb_d=4$ and 
\begin{align*}
\fg_d(1)&= \langle e_{\alpha_1},e_{\alpha_2},e_{\alpha_4},e_{\alpha_1+
\alpha_3},e_{\alpha_2+\alpha_3},e_{\alpha_3+\alpha_4}\rangle_k,\\ 
\fg_d(2)&=\langle e_{\alpha_1+\alpha_2+\alpha_3},e_{\alpha_1+\alpha_3+
\alpha_4},e_{\alpha_2+\alpha_3+\alpha_4}\rangle_k.
\end{align*}
Again, let $\lambda\colon \fg_d(2)\rightarrow k$ be any linear map. 
As above, we now obtain 
\begin{center} {
$\cG_\lambda=\pm\left(\begin{array}{r@{\hspace{5pt}}r@{\hspace{5pt}}
r@{\hspace{5pt}}r@{\hspace{5pt}}r@{\hspace{5pt}}r}
   0& 0& 0& 0& x_1& x_2 \\  0& 0& 0& x_1& 0& x_3 \\  0& 0& 0& x_2& x_3& 0 \\ 
   0& -x_1& -x_2& 0& 0& 0 \\  -x_1& 0& -x_3& 0& 0& 0 \\ 
   -x_2& -x_3& 0& 0& 0& 0 \end{array}\right).$}
\end{center}
where $x_1:=\lambda(e_{\alpha_1+\alpha_2+\alpha_3})$,
$x_2:=\lambda(e_{\alpha_1+\alpha_3+\alpha_4})$,
$x_3:=\lambda(e_{\alpha_2+\alpha_3+\alpha_4})$.
We compute $\det(\cG_\lambda)=4x_1^2x_2^2x_3^2$. Hence, if $p=2$, then the 
radical of $\sigma_\lambda$ will never be zero and so condition (K2) 
will never be satisfied. On the other hand, if $p\neq 2$, then the 
radical of $\sigma_\lambda$ will be zero whenever $x_1x_2x_3\neq 0$.
\end{exmp}

In the above examples, it was easy to compute the determinant of the 
Gram matrix $\cG_\lambda$. However, we will encounter examples below 
where the computation of $\det(\cG_\lambda)$ becomes a very serious issue.

\section{Unipotent support} \label{sec3}

We are now looking for a unifying principle behind the various examples
that we have seen so far. In Conjecture~\ref{main} below, we propose such
a unification. First we need some preparations.

\begin{abs} \label{abs31} Let $\Irr(G^F)$ be the set of complex 
irreducible representations of $G^F$ (up to isomorphism). 
Then there is a canonical map 
\[ \Irr(G^F)\rightarrow \{\text{$F$-stable unipotent classes of $G$}\},
\qquad \rho\mapsto C_\rho,\]
defined in terms of the notion of ``unipotent support''. To explain this,
we need to introduce some notation. Let $C$ be an $F$-stable unipotent 
conjugacy class of $G$. Then $C^F$ is a union of conjugacy classes of $G^F$. 
Let $u_1, \ldots,u_r\in C^F$ be representatives of the classes of $G^F$ 
contained in $C^F$. For $1\leq i \leq r$ we set $A(u_i):=C_G(u_i)/C_G^\circ
(u_i)$. Since $F(u_i)=u_i$, the Frobenius map $F$ induces an automorphism of 
$A(u_i)$ which we denote by the same symbol. Let $A(u_i)^F$ be the group of 
fixed points under $F$. Then we set
\[ \AV(\rho,C):=\sum_{1 \leq i \leq r} |A(u_i):A(u_i)^F|\mbox{trace}\bigl(
\rho(u_i)\bigr)\]
for any $\rho\in\Irr(G^F)$. Note that this does not depend on the choice 
of the representatives $u_i$. Now the desired map is obtained as follows. 
Let $\rho\in\Irr(G^F)$ and set $a_\rho:=\max\{ \dim C\mid \AV(\rho, C)
\neq 0\}$ (where the maximum is taken over all $F$-stable unipotent classes
$C$ of~$G$). By the main results of \cite{gema}, \cite{L2}, there is a 
unique $C$ such that $\dim C=a_\rho$ and $\AV(\rho,C)\neq 0$. This $C$ 
will be denoted by $C_\rho$ and called the {\it unipotent support} 
of~$\rho$. 

By \cite[Remark~3.9]{gema}, it is known that $C_\rho$ comes 
from characteristic~$0$ (see \ref{abs20}) and, hence, equals $C_{d_\rho}$ 
for a well-defined weighted Dynkin diagram $d_\rho\in\Delta$. We set
\[ \Delta_{k,F}^{\!\bullet}:=\{d_\rho\in \Delta\mid \rho\in \Irr(G^F)\}.\]
Thus, $\Delta_{k,F}^{\!\bullet}$ consists precisely of those weighted Dynkin 
diagrams for which the corresponding unipotent class of $G$ occurs as the 
unipotent support of some irreducible representation of $G^F$. In order to 
obtain a subset of $\Delta$ which only depends on $G$ and not on the choice 
of the particular Frobenius map $F$, we set 
\[ \Delta_k^{\!\bullet}:=\bigcup_{n\geq 1} \Delta_{k,F^n}^{\!\bullet}.\]
(Note that, if $F_1\colon  G\rightarrow G$ is another Frobenius map, then
there always exist integers $n,n_1\geq 1$ such that $F_1^{n_1}=F^n$.)
Thus, $\cC^\bullet=\{C_d \mid d\in \Delta_k^{\!\bullet}\}$ is precisely
the set of unipotent classes mentioned at the end of Section~\ref{sec0}.
\end{abs}

\begin{table}[htbp] \caption{The sets $\Delta_k^{\!\bullet}\setminus
\Delta_{\operatorname{spec}}$ for $G$ of exceptional type}
\label{tabb}
\begin{center} {\small
$\begin{array}{ccc} \hline G_2 & \cb_d & \text{condition}  \\ \hline
A_1  &3 & p\neq 3\\ \tilde{A}_1  &2 & p\neq 2\\\hline\\ 
\hline F_4 & \cb_d & \text{condition}  \\ \hline
A_1  &16 & p\neq 2\\ A_2{+}\tilde{A}_1  &7 & p\neq 2\\ B_2  &6 
& p\neq 2\\ \tilde{A}_2{+}A_1  & 6 & p\neq 3\\ C_3(a_1)  & 5 & 
p\neq 2 \\\hline\\ \hline E_6  & \cb_d & \text{condition}  \\ \hline
3A_1 & 16 & p\neq 2\\2A_2{+}A_1  &  9 & p\neq 3\\A_3{+}A_1  & 8 
& p\neq 2\\ A_5 & 4 & p\neq 2\\\hline\\
\hline E_7 & \cb_d & \text{condition}  \\ \hline
(3A_1)' & 31 & p\neq 2\\ 4A_1 & 28 & p\neq 2\\ 2A_2{+}A_1 & 18 & p\neq 3\\
(A_3{+}A_1)' & 17 & p\neq 2\\ A_3{+}2A_1 & 16 & p\neq 2\\ D_4{+}A_1 
&12& p\neq 2\\ A_5' &9 & p\neq 2\\ A_5{+}A_1 &9 & p\neq 3\\ D_6(a_2) &8 & 
p\neq 2\\ D_6 & 4& p\neq 2\\\hline
\end{array}\qquad\quad \begin{array}{cccc} 
\hline E_8 & \cb_d & \text{condition}  \\ \hline
3A_1 & 64 & p\neq 2\\
4A_1 & 56 & p\neq 2\\
A_2{+}3A_1 & 43 & p\neq 2\\
2A_2{+}A_1 & 39 & p\neq 3\\
A_3{+}A_1 & 38 & p\neq 2\\
2A_2{+}2A_1 & 36 & p\neq 3\\
A_3{+}2A_1 & 34 & p\neq 2\\
A_3{+}A_2{+}A_1 & 29 & p\neq 2\\
D_4{+}A_1 & 28 & p\neq 2\\
2A_3 & 26 & p\neq 2\\
A_5 & 22 & p\neq 2\\
A_4{+}A_3 & 20 & p\neq 5\\
A_5{+}A_1 & 19 & p\neq 2,3\\
D_5(a_1){+}A_2 & 19 & p\neq 2\\
D_6(a_2) & 18 & p\neq 2\\
E_6(a_3){+}A_1 & 18 & p\neq 3\\
E_7(a_5) & 17 & p\neq 2\\
D_5{+}A_1 & 16 & p\neq 2\\
D_6 & 12 & p\neq 2\\
A_7 & 11 & p\neq 2\\
E_6{+}A_1 & 9 & p\neq 3\\
E_7(a_2) & 8 & p\neq 2\\
D_7 & 7 & p\neq 2\\
E_7 & 4 & p\neq 2\\ \hline \multicolumn{3}{l}{\text{(Notation from 
\cite[\S 13.1]{C2})}}\\\\\\
\end{array}$}
\end{center}
\end{table}

\begin{rem} \label{uni1} Recall from Lusztig \cite{Ls1}, \cite{Ls2} the 
notion of {\it special unipotent classes}. The precise definition of these 
classes is not elementary; it involves the Springer correspondence and the 
notion of {\it special representations} of the Weyl group of~$G$. By 
\cite[Prop.~4.2]{gema}, an $F$-stable unipotent class is special if and 
only if it is the unipotent support of some unipotent representation 
of $G^F$. Thus, we conclude that special unipotent classes come from 
characteristic~$0$ and we have:
\begin{itemize}
\item[(a)] $\Delta_{\text{spec}}\subseteq \Delta_k^{\!\bullet}$, where 
$\Delta_{\text{spec}}$ denotes the set of all $d\in \Delta$ such that the 
corresponding unipotent class $C_d$ of $G$ is special. 
\end{itemize}
Explicit descriptions of the sets $\Delta_{\text{spec}}$ are contained
in the tables in \cite[\S 13.4]{C2}. Using the above concepts, one can 
give an alternative description of the map $\rho\mapsto C_\rho$; see 
\cite[13.4]{L1}, \cite[10.9]{L2}, \cite[\S 1]{L4}, \cite[\S 3.C]{gema}. 
This alternative description allows one to compute $\Delta_k^{\!\bullet}$ 
explicitly, without knowing any character values of $G^F$. In particular, 
this yields the following statement:
\begin{itemize}
\item[(b)] If $p$ is a good prime for $G$, then $\Delta_k^{\!\bullet}=
\Delta$. 
\end{itemize}
This was first stated (in terms of the alternative description of $\rho
\mapsto C_\rho$ and for $p$ large) as a conjecture in \cite[\S 9]{Ls1}; see 
also \cite[13.4]{L1}. A full proof eventually appeared in 
\cite[Theorem~1.5]{L4}.
\end{rem}

\begin{prop} \label{uni2} Assume that $G$ is simple and $p$ is a bad prime
for $G$. If $G$ is of classical type $B_n,C_n$ or $D_n$, then 
$\Delta_k^{\!\bullet}= \Delta_{\operatorname{spec}}$. If $G$ is of 
exceptional type $G_2$, $F_4$, $E_6$, $E_7$ or $E_8$, then the sets 
$\Delta_k^{\!\bullet}\setminus \Delta_{\operatorname{spec}}$ are
specified in Table~\ref{tabb}.
\end{prop}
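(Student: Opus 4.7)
The plan is to compute $\Delta_k^{\!\bullet}$ by exploiting the alternative, purely combinatorial description of the map $\rho\mapsto C_\rho$ mentioned in Remark~\ref{uni1}, which is available from Lusztig \cite[\S 1]{L4} and Geck--Malle \cite[\S 3.C]{gema} and involves no actual character values of $G^F$. Once this is done, the classification statement for each simple type reduces to a finite verification.

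First I would perform a Jordan decomposition reduction. Every $\rho\in\Irr(G^F)$ lies in a rational Lusztig series $\mathcal{E}(G^F,(s))$ indexed by an $F$-stable semisimple class $(s)$ in the dual group $G^*$. By the cited description, the weighted Dynkin diagram $d_\rho$ is obtained from the diagram of the unipotent support (inside the connected centralizer $C^\circ_{G^*}(s)$) of the unipotent part $\rho_u$ of $\rho$ under Jordan decomposition, followed by Lusztig--Spaltenstein $j$-induction from the corresponding pseudo-Levi of $G$ up to $G$ itself. Since $\bigcup_n G^{F^n}=G$ every semisimple class becomes $F^n$-stable for some $n$, so $\Delta_k^{\!\bullet}$ depends only on the root datum of $G$. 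Thus the problem becomes: enumerate all pseudo-Levi (Borel--de Siebenthal) subsystems $H\subseteq G$, compute $\Delta_{\operatorname{spec}}(H)$ for each $H$, and determine the image under $j$-induction in $\Delta$.

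For $G$ classical of type $B_n, C_n, D_n$ with $p=2$, I would combine this with the symbol/partition combinatorics: the pseudo-Levi subgroups themselves are of classical type, their special classes are parametrised by special symbols, and the elementary behaviour of $j$-induction on symbols (together with Lusztig's characterisation of specialness via symbols) shows that every class obtained this way in $G$ is again special. Combined with Remark~\ref{uni1}(a), this yields the equality $\Delta_k^{\!\bullet}=\Delta_{\operatorname{spec}}$ for these types.

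For $G$ exceptional the verification is a case-by-case enumeration: use the known list of pseudo-Levi subsystems (see e.g.\ the tables compiled in the references of \cite[\S 13]{C2}), enumerate the special classes of each, compute their $j$-induced class in $G$ using the standard tables of induced unipotent classes (Spaltenstein \cite{spa}, Carter \cite[\S 13]{C2}), and then subtract $\Delta_{\operatorname{spec}}(G)$. This is a finite, mechanical computation that can be implemented in {\sf GAP}/{\sf CHEVIE} \cite{gap4} and whose output is meant to be the list in Table~\ref{tabb}; the stated restrictions on $p$ arise precisely because the relevant pseudo-Levi subsystem (or its class in the dual) only exists as an $F$-stable object when the corresponding bad prime is excluded. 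The main obstacle is the sheer combinatorial bulk of the $E_8$ case, where many pseudo-Levi subsystems contribute and one must be careful to exclude spurious diagrams coming from non-$F$-stable classes of $G^*$; aside from this bookkeeping the verification is algorithmic.
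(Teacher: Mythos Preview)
Your overall strategy is the right one and is essentially the same route the paper sketches: use the alternative description of unipotent support via Jordan decomposition and $j$-induction (as in \cite{L4}, \cite{gema}), reduce to the Weyl groups of centralisers of semisimple elements in the dual group, and then carry out a finite enumeration.

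However, there is a genuine gap in your treatment of the classical types. You propose to run over \emph{all} pseudo-Levi (Borel--de Siebenthal) subsystems $H$ and argue via symbol combinatorics that $j$-induction of special representations of $W(H)$ always lands in the special representations of $W(G)$. That assertion is false in this generality: in good characteristic the very same procedure produces \emph{every} weighted Dynkin diagram (Remark~\ref{uni1}(b)), so $j$-induction from arbitrary pseudo-Levis certainly does not preserve specialness. What makes the argument go through for classical $G$ with $p=2$ is a much sharper structural input which you do not invoke: by \cite{C1}, in characteristic~$2$ the centraliser of a semisimple element in a classical group is always a genuine Levi subgroup of a parabolic, so $W(H)$ is a parabolic subgroup of $W(G)$, and then the standard fact that $j$-induction from parabolic subgroups preserves specialness finishes the argument at once. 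Your symbol calculation, as stated, would have to be restricted to Levi $H$'s to be correct, and you never justify that restriction.

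A closely related issue affects your exceptional case. The characteristic-$0$ Borel--de Siebenthal list is not the right input: one needs the list of centralisers of semisimple elements that actually occur in the dual group over a field of characteristic~$p$, and this depends on~$p$ (certain torsion elements in the maximal torus disappear). This is exactly why the paper appeals to L\"ubeck's tables \cite{Lue07}. Your explanation that the conditions on~$p$ in Table~\ref{tabb} come from ``$F$-stability'' of pseudo-Levis is off target (taking the union over all $F^n$ already removes any rationality obstruction); the conditions come from whether the relevant semisimple class exists at all in characteristic~$p$. Once you feed in the correct, characteristic-dependent list of centralisers, the exceptional computation proceeds exactly as in \cite[\S 7]{L4} and yields Table~\ref{tabb}.
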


(In Table~\ref{tabb}, we list all $d\in \Delta\setminus 
\Delta_{\text{spec}}$; for each such $d$, the last column gives the
condition on $p$ such that $d\in \Delta_k^{\!\bullet}$.)

\begin{proof} This follows by analogous methods as in \cite{L4}. The 
special feature of the case where $G$ is of classical type and $p=2$ is
the fact that then the centraliser of a semisimple element is a Levi 
subgroup of some parabolic subgroup (see, e.g., \cite[\S 4]{C1}). If $G$ 
is of exeptional type, then one uses explicit computations completely 
analogous to those in \cite[\S 7]{L4}; here, one also needs L\"ubeck's 
tables \cite{Lue07} concerning possible centralisers of semisimple elements 
in these cases. We omit further details. 
\end{proof}

\begin{conj} \label{main} Let $d\in\Delta_k^{\!\bullet}$ be invariant
under the permutation $\tau\colon \Phi\rightarrow \Phi$ induced by~$F$. 
Then there exist linear maps $\lambda\colon\fg_d(2)\rightarrow k$ which 
are defined over $\F_q$ and are in ``sufficiently general position'' 
(see Definition~\ref{def24}). Hence, following the general procedure 
described in \ref{abs17}, we can define the corresponding 
$\operatorname{GGGR}$s $\Gamma_{d,\lambda}$ of~$G^F$. 
\end{conj}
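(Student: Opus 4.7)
The conditions (K1) and (K2) are purely geometric---they involve only $G$ and $\lambda \in \fg_d(2)^*$, not the Frobenius---so my plan is to first prove the algebraic statement over $k$ and then descend to $\F_q$ using $F$-stability. Define
\[
S_d := \{\lambda \in \fg_d(2)^* \mid \lambda \text{ satisfies (K1) and (K2)}\}.
\]
I would aim to show that $S_d$ is a non-empty Zariski-open subset of $\fg_d(2)^*$ for every $d \in \Delta_k^{\!\bullet}$. By Remark~\ref{rem24b}, condition (K1) already cuts out a dense open subset of $\fg_d(2)^*$; by Corollary~\ref{cor24}, condition (K2) is either identically false or is also open. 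So the essential task is to exhibit a single $\lambda$ satisfying (K2).

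Following Remark~\ref{rem24c}, this reduces to an arithmetic statement. Write $\Phi_2 = \{\gamma_1,\ldots,\gamma_m\}$ and $x_l = \lambda(e_{\gamma_l})$; using a Chevalley basis over $\Z$, the determinant $\det(\cG_\lambda)$ may be regarded as a polynomial in $\Z[x_1,\ldots,x_m]$, and (K2) is satisfiable precisely when some coefficient of this polynomial survives reduction modulo~$p$. A brute-force proof then combines Proposition~\ref{uni2} and Table~\ref{tabb} with the combinatorial algorithm of Remark~\ref{rem24c}: for each simple type, each $d \in \Delta_k^{\!\bullet}$, and each bad prime \emph{not} excluded by the table, one verifies in \textsf{GAP} that $\det(\cG_\lambda) \not\equiv 0 \bmod p$. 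This is essentially the computational evidence alluded to by the author. A conceptual proof would require a structural reason why membership in $\Delta_k^{\!\bullet}$ controls the characteristic-$p$ behaviour of the determinant; for the special part $d \in \Delta_{\text{spec}}$ the Springer correspondence together with Joseph's theory of Goldie rank polynomials looks promising, while for the residual entries of Table~\ref{tabb} one would have to exploit L\"ubeck's data \cite{Lue07} on semisimple centralisers that already underlies Proposition~\ref{uni2}. I do not see an argument that short-circuits the case analysis in general.

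The main obstacle, as I see it, is precisely the non-vanishing modulo~$p$ of $\det(\cG_\lambda)$ for every $d \in \Delta_k^{\!\bullet}$: no general mechanism tying the combinatorial condition of being a unipotent support class to the characteristic-$p$ arithmetic of the determinant polynomial is apparent. Once that geometric statement is in hand, the descent to $\F_q$ is routine. Since $d$ is $\tau$-invariant, the vector space $\fg_d(2)^*$ and both defining conditions of $S_d$ are $F$-stable, so $S_d$ is a non-empty $F$-stable Zariski-open subset of the $\F_q$-rational affine space $\fg_d(2)^*$; its complement is cut out by a single polynomial defined over $\F_p$, and an elementary point-counting estimate shows $S_d^F \neq \emptyset$ as soon as $q$ exceeds the degree of that polynomial. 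The finitely many small-$q$ residual cases would be settled by direct construction, in the spirit of Examples~\ref{bd4} and~\ref{bg2}, after which $\Gamma_{d,\lambda}$ is built as in~\ref{abs17}.
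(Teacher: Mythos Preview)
This statement is a \emph{conjecture}: the paper does not prove it. The concluding paragraph of Section~\ref{sec4} explicitly lists what is still missing---a description of the sets $\fg_d(2)^{*!}$ for exceptional types (needed to handle (K1)), and the verification that a suitable $\lambda$ can be chosen over~$\F_q$---and ends with ``it would be highly desirable to find a more conceptual explanation for all this.'' So there is no proof in the paper to compare against; what the paper provides is evidence (Examples~\ref{bc2bis}--\ref{be8b}, the computations in Section~\ref{sec4}, and Corollary~\ref{corf4a}, which only settles the weaker Conjectures~\ref{main1a} and~\ref{main2} for exceptional types).

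Your outline correctly identifies the architecture of that evidence: (K1) defines a dense subset of $\fg_d(2)^*$ (Remark~\ref{rem24b}), (K2) is an open condition (Remark~\ref{rem24c}, Corollary~\ref{cor24}), and the crux is whether $\det(\cG_\lambda)\not\equiv 0\bmod p$ for each $d\in\Delta_k^{\!\bullet}$. For exceptional types that is exactly what the paper checks computationally in \ref{corf4}(a) and Proposition~\ref{corf4aa}. But beyond the obstacles you already flag, two further gaps make this a sketch rather than a proof. First, for the classical types $B_n,C_n,D_n$ with $p=2$ the ``brute-force'' check in {\sf GAP} is an infinite task over all ranks~$n$; neither you nor the paper offers a uniform argument, and your gesture towards Goldie rank polynomials is a suggestion, not an argument. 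Second, your $\F_q$-descent is incomplete: the point-counting bound depends on the degree of the polynomial cutting out the complement of $S_d$, and that degree grows with the rank, so the ``finitely many small-$q$ residual cases'' again form an infinite family across the classical groups. Moreover, Remark~\ref{rem24b} only says that $\fg_d(2)^{*!}$ \emph{contains} a dense open subset, not that it is one, so your assertion that the complement of $S_d$ is cut out by a single $\F_p$-polynomial needs justification before the descent can even be set up.
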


By Remarks~\ref{rem24a}, \ref{rem24c}($*$) and \ref{uni1}(b), the conjecture 
holds for all $d\in \Delta_k^{\!\bullet}=\Delta$ if $p$ is a good prime 
for~$G$. In particular, it holds when $G$ is of type $A_n$. We will now 
discuss a number of examples supporting the conjecture in cases where~$p$ is
a bad prime. As already explained in the previous section, the main issue is 
the validity of condition (K2) in Definition~\ref{def24}.

\begin{exmp} \label{bc2bis} (a) Let $G=\mbox{Sp}_4(k)$ and assume that 
$k$ has characteristic~$2$. By Proposition~\ref{uni2}, or by inspection 
of the known character table of $G^F$ (see \cite{Eno1}), we note that the 
unipotent class corresponding to the weighted Dynkin diagram $d_0$ in 
Example~\ref{bsp4} is not the unipotent support of any irreducible 
character of~$G^F$. Thus, $d_0 \not\in \Delta_k^{\!\bullet}$ and so this 
critical case does not enter in the range of validity of 
Conjecture~\ref{main}. In fact, $\Delta_k^{\!\bullet}=\Delta\setminus
\{d_0\}$ if $p=2$.

(b) The situation is similar for $G$ of type $G_2$. Consider the two 
weighted Dynkin diagrams with $\fg_d(1)\neq \{0\}$ in Example~\ref{bg2}. 
By Table~\ref{tabb}, or by inspection of the known character table of $G^F$ 
(see \cite{EnYa} for $p=2$ and \cite{Eno2} for $p=3$), we see that 
$\Delta_k^{\!\bullet}=\Delta \setminus\{d\}$ where $d$ is as in 
Example~\ref{bg2}(a) if $p=2$, and $d$ is as in Example~\ref{bg2}(b) 
if $p=3$. 
\end{exmp}



\begin{exmp} \label{bd4bis} Let again $G$ be of type $D_4$ and return
to the discussion in Example~\ref{bd4}. The special unipotent classes
are explicitly described in \cite[p.~439]{C2}; there is only one class 
which is not special (it corresponds to elements with Jordan blocks of 
sizes $3,2,2,1$), and this is precisely the one considered in
Example~\ref{bd4}(b). But, by Proposition~\ref{uni2}, the corresponding 
weighted Dynkin diagram does not belong to $\Delta_k^{\!\bullet}$ if 
$p=2$. 
\end{exmp}

\begin{exmp} \label{be8} Let $G$ be of type $E_8$ with diagram 
\begin{center}
\begin{picture}(150,40)
\put( 1,32){$\alpha_1$}
\put( 5,25){\circle*{5}}
\put( 7,25){\line(1,0){20}}
\put( 25,32){$\alpha_3$}
\put( 29,25){\circle*{5}}
\put( 31,25){\line(1,0){20}}
\put( 49,32){$\alpha_4$}
\put( 53,25){\circle*{5}}
\put( 55,25){\line(1,0){20}}
\put( 73,32){$\alpha_5$} 
\put( 77,25){\circle*{5}}
\put( 79,25){\line(1,0){20}}
\put( 97,32){$\alpha_6$} 
\put(101,25){\circle*{5}}
\put(103,25){\line(1,0){20}}
\put(121,32){$\alpha_7$} 
\put(124,25){\circle*{5}}
\put(127,25){\line(1,0){20}}
\put(145,32){$\alpha_8$} 
\put(149,25){\circle*{5}}
\put(53,23){\line(0,-1){20}}
\put(53,3){\circle*{5}}
\put(60,2){$\alpha_2$} 
\end{picture}
\end{center}
Let $d_0\in\Delta$ correspond to the class denoted $A_4{+}A_3$ in 
Table~\ref{tabb}. We have $d_0(\alpha_4)=d_0(\alpha_7)=1$ and 
$d_0(\alpha_i)=0$ for $i\neq 4,7$; see \cite[p.~406]{C2}. 
By Table~\ref{tabb}, we have $\Delta_k^{\!\bullet}=\Delta\setminus\{d_0\}$ 
if $p=5$; furthermore, $d_0\in\Delta_k^{\!\bullet}$ if $p\neq 5$. Let 
$\lambda\colon \fg_{d_0}(2) \rightarrow k$ be a linear map and consider 
the Gram matrix $\cG_\lambda$ of the alternating form $\sigma_\lambda$. 
We claim:
\begin{itemize}
\item[(a)] If $p\neq 5$, then $\det(\cG_\lambda)\neq 0$ for some $\lambda
\colon \fg_{d_0}(2)\rightarrow k$.
\item[(b)] If $p=5$, then $\det(\cG_\lambda)=0$ for all $\lambda
\colon \fg_{d_0}(2)\rightarrow k$.
\end{itemize}
First, we find that $\dim \fg_{d_0}(1)=24$ and $\dim \fg_{d_0}(2)=
21$. As explained in Remark~\ref{rem24c}, we then explicitly work out
$\cG_\lambda$. We have 
\[\cG_\lambda=\bigl(f_{ij}(x_1,\ldots,x_{21})\bigr)_{1\leq i,j \leq 24}\]
where $f_{ij}$ are certain polynomials with integer coefficients in 
$21$ indeterminates. In order to verify (a), we argue as follows. If $p>5$, 
then (a) holds by Remark~\ref{rem24a}. If $p=2,3$, then we simply run 
through all vectors of values $(x_1,\ldots,x_{21})\in \{0,1\}^{21}$ 
(starting with the vector $1,1,\ldots,1$ and then increasing step by 
step the number of zeroes) until we find one such that $\det(\cG_\lambda)
\neq 0$. It turns out that this search is successful just after a few steps. 
The verification of (b) is much harder. Let $p=5$ and denote by $\bar{f}_{ij}$ 
the reduction of $f_{ij}$ modulo~$p$. Then we need to check that
$\det(\bar{f}_{ij})=0$. It seems to be practically impossible to compute 
such a determinant directly,  or even just the rank. (Using special values
of the $x_i$ as above, one quickly sees that the rank of $(\bar{f}_{ij})$
is at least~$22$.) Now, since $\cG_\lambda$ is anti-symmetric, we can use the 
fact that the desired determinant is given by $\mbox{Pf}(\bar{f}_{ij})^2$,
where $\mbox{Pf}(\bar{f}_{ij})$ denotes the Pfaffian of the matrix 
$(\bar{f}_{ij})$; see, for example, \cite{dress}, \cite{knu}. (I am indebted 
to Ulrich Thiel for pointing this out to me.) A simple recursive algorithm
(via row expansion, as in \cite[1.5]{dress}) is sufficient to compute 
$\mbox{Pf}(\bar{f}_{ij})=0$ in this case and yields~(b). (Over $\Z$,
the Pfaffian of $(f_{ij})$ is a non-zero polynomial which is a linear
combination of $1386$ monomials in $21$ indeterminates, where all
coefficients are divisible by~$5$.) 

The class $A_4{+}A_3$ in type $E_8$ also plays a special role in
\cite[\S 4.2]{prem2}. (I thank Alexander Premet for pointing this out
to me.)
\end{exmp} 

\begin{exmp} \label{be8b} Let again $G$ be of type $E_8$, with diagram 
as above. Let $d_1\in\Delta$ correspond to the class denoted $A_5{+}A_1$
in Table~\ref{tabb}. We have $d_1(\alpha_1)=d_1(\alpha_4)=
d_1(\alpha_8)=1$ and $d_1(\alpha_i)=0$ for $i\neq 1,4,8$; see 
\cite[p.~406]{C2}. By Table~\ref{tabb}, we have $d_1\in \Delta_k^{\!\bullet}$
if $p\neq 2,3$, and $d_1\not\in\Delta_k^{\!\bullet}$ otherwise. Let 
$\lambda\colon \fg_{d_1}(2) \rightarrow k$ be a linear map and consider 
the Gram matrix $\cG_\lambda$ of the alternating form $\sigma_\lambda$. 
Here, we find that $\dim \fg_{d_1}(1)=22$ and $\dim \fg_{d_1}(2)=
18$. Using computations as in the previous example, we obtain: 
\begin{itemize}
\item[(a)] If $p\neq 2,3$, then $\det(\cG_\lambda)\neq 0$ for some $\lambda
\colon \fg_{d_1}(2)\rightarrow k$.
\item[(b)] If $p\in\{2,3\}$, then $\det(\cG_\lambda)=0$ for all $\lambda
\colon \fg_{d_1}(2)\rightarrow k$.
\end{itemize}
(In (b), there are $\lambda$ such that $\cG_\lambda$ has rank $20$.)
\end{exmp}

The examples suggest the following characterisation of the
set $\Delta_k^{\!\bullet}$.

\begin{conj} \label{main1a} Let $d \in \Delta$. Then $d\in 
\Delta_k^{\!\bullet}$ if and only if either $\fg_d(1)=\{0\}$, or there 
exists a linear map $\lambda \colon \fg_d(2)\rightarrow k$ such that 
the radical of $\sigma_\lambda$ is zero.
\end{conj}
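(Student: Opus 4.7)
The plan is to prove Conjecture~\ref{main1a} by a case-by-case verification across the simple types and the bad primes, since the case of good $p$ is already covered: Remark~\ref{uni1}(b) gives $\Delta_k^{\!\bullet}=\Delta$, while Remark~\ref{rem24a} provides explicit $\lambda$ satisfying (K2) whenever $\fg_d(1)\neq\{0\}$. Thus the conjecture holds trivially for good $p$, and only bad $p$ require genuine argument.

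First I would reduce to the case where $G$ is simple. The derived group of $G$ is isogenous to a direct product of simple factors, $\Phi$ decomposes accordingly, each $d\in\Delta$ splits as a tuple $(d^{(s)})_s$ of weighted Dynkin diagrams for the factors, and the pieces $\fg_d(i)$, the form $\sigma_\lambda$ and the Gram matrix $\cG_\lambda$ all split as direct sums or block-diagonal matrices indexed by $s$; in particular $\det(\cG_\lambda)$ factors as a product. Since unipotent supports of irreducible characters of a direct product factorise as products of unipotent supports of the tensor factors (applied after passing to a suitable power of $F$), $\Delta_k^{\!\bullet}$ also factors through the simple factors. Hence it suffices to prove the statement for each simple type.

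For simple $G$ and bad $p$, I would use Proposition~\ref{uni2} together with the explicit list of weighted Dynkin diagrams and their gradings in \cite[\S 13.1]{C2}. The forward implication, that $d\in\Delta_k^{\!\bullet}$ with $\fg_d(1)\neq\{0\}$ admits $\lambda$ with $\det(\cG_\lambda)\neq 0$, is handled by a direct search: for each relevant $d$, one computes $\cG_\lambda$ as a polynomial matrix via the combinatorial algorithm of Remark~\ref{rem24c} and then locates values $x_l\in\{0,1\}$ making the reduction $\det(\cG_\lambda)\bmod p$ non-zero. This is the strategy that succeeded in Examples~\ref{bd4}(a) and~\ref{be8}(a). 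The other piece of the forward implication, that $\fg_d(1)=\{0\}$ forces $d\in\Delta_k^{\!\bullet}$, is checked by verifying that every weighted Dynkin diagram without roots of weight $1$ (equivalently, every even diagram) already appears in Proposition~\ref{uni2} and Table~\ref{tabb} as lying in $\Delta_k^{\!\bullet}$; these are known to be special, so this is automatic.

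The harder direction is the converse: for every $d\in\Delta\setminus\Delta_k^{\!\bullet}$ with $\fg_d(1)\neq\{0\}$ one must prove that $\det(\cG_\lambda)\equiv 0$ identically as a polynomial in the $x_l$ over $\F_p$. Using the anti-symmetry of $\cG_\lambda$ and the identity $\det(\cG_\lambda)=\mathrm{Pf}(\cG_\lambda)^2$, it suffices to show the Pfaffian vanishes identically mod $p$. Example~\ref{be8} shows that a recursive row-expansion algorithm for the Pfaffian (cf.\ \cite[1.5]{dress}) is tractable in {\sf GAP} even for the largest case of interest, namely $A_4{+}A_3$ in $E_8$. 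The \emph{main obstacle} I anticipate is the sheer combinatorial bulk of this verification: Table~\ref{tabb} lists many exceptional diagrams in $E_7$ and $E_8$, several with Gram matrices of size exceeding $20$ involving more than a dozen indeterminates, and each such case requires a symbolic Pfaffian computation modulo the relevant bad prime. A conceptual explanation linking the vanishing of $\mathrm{Pf}(\cG_\lambda)\bmod p$ to the absence of $d$ from $\Delta_k^{\!\bullet}$---perhaps through Lusztig's framework of nilpotent pieces in Section~\ref{sec2}, or through an obstruction to (K1) being solvable over $\F_q$---would be highly desirable, but is not apparent from the present approach.
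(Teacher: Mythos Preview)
Your overall strategy coincides with the paper's: reduce to simple factors, dispose of good primes via Remarks~\ref{rem24a} and~\ref{uni1}(b), and for bad primes verify case by case that (i) each $d\in\Delta_k^{\!\bullet}$ with $\fg_d(1)\neq\{0\}$ admits a $\{0,1\}$-valued $\lambda$ with $\det(\cG_\lambda)\neq 0$, and (ii) each $d\notin\Delta_k^{\!\bullet}$ has identically vanishing Pfaffian. This is exactly how the paper proceeds in \ref{corf4} and Corollary~\ref{corf4a}. Two points, however, separate your plan from a complete proof.

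First, the classical types $B_n$, $C_n$, $D_n$ with $p=2$ form \emph{infinite} families. Your case-by-case search cannot terminate here, and you give no uniform argument showing, for every $n$, that $\det(\cG_\lambda)$ vanishes identically for all non-special $d$ and is non-vanishing for some $\lambda$ for every special $d$ with $\fg_d(1)\neq\{0\}$. The paper does not claim Conjecture~\ref{main1a} for these types either; Corollary~\ref{corf4a} is stated only for the exceptional types. So your reduction to simple factors followed by Proposition~\ref{uni2} does not, by itself, close the classical case.

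Second, even restricting to exceptional types, your claim that the recursive Pfaffian computation is ``tractable \dots\ even for the largest case of interest, namely $A_4{+}A_3$ in $E_8$'' is too optimistic. The paper records in \ref{corf4} that for several $E_8$ classes (e.g.\ $A_2{+}3A_1$, $2A_2{+}A_1$, $2A_2{+}2A_1$, $A_3{+}2A_1$, $A_3{+}A_2{+}A_1$) the Pfaffian computation ``appears to be practically impossible''; these cases are settled only by invoking the Groebner-basis computations of Steel--Thiel (Proposition~\ref{corf4aa}). The $A_4{+}A_3$ case in Example~\ref{be8} is misleading as a benchmark: its Pfaffian over $\Z$ has only $1386$ monomials, whereas the obstruction in the critical cases is the explosion in the number of monomials, not the matrix size. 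Your ``main obstacle'' paragraph anticipates difficulty but not outright failure of the Pfaffian route; the paper's resolution relies on an external computational input that your plan does not mention.
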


Finally, we state the following conjecture concerning special unipotent 
classes. As in \ref{abs14}, let $G_0$ be a connected reductive algebraic 
group over $\C$ of the same type as $G$; let $\fg_0$ be its Lie algebra. 
For $d\in \Delta$ and $i=1,2$, we set 
\[\fg_{\Z,d}(i):=\langle e_\alpha \mid d(\alpha)=i\rangle_\Z\subseteq 
\fg_0.\]
As in \ref{abs17}, given a homomorphism $\lambda \colon \fg_{\Z,d}(2)
\rightarrow \Z$, we obtain an alternating form $\sigma_\lambda \colon 
\fg_{\Z,d}(1) \times \fg_{\Z,d}(1)\rightarrow \Z$ and we may consider its 
Gram matrix with respect to the $\Z$-basis $\{e_\alpha\mid \alpha\in 
\Phi_1\}$ of $\fg_{\Z,d}(1)$. If this Gram matrix has determinant $\pm 1$, 
then we say that $\sigma_\lambda$ is non-degenerate over~$\Z$.

\begin{conj} \label{main2} With the above notation, let $d\in \Delta$.
Then we have $d\in \Delta_{\operatorname{spec}}$ if and only if either 
$\fg_{\Z,d}(1)=\{0\}$, or there exists a homomorphism $\lambda \colon
\fg_{\Z,d}(2)\rightarrow \Z$ such that $\sigma_\lambda$ is non-degenerate
over~$\Z$.
\end{conj}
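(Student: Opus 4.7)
The plan is to split Conjecture~\ref{main2} into its two directions and handle them by quite different methods.

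For the direction ``$\Leftarrow$'': Suppose $\lambda\colon\fg_{\Z,d}(2)\to\Z$ is given with $\det(\cG_\lambda)=\pm 1$. Reducing $\lambda$ modulo any prime $p$ produces a linear map $\bar{\lambda}_p$ whose Gram matrix still has determinant $\pm 1\bmod p$, so condition~(K2) of Definition~\ref{def24} holds in every characteristic. Combined with Remark~\ref{rem24b} (condition~(K1) holds on a dense open subset of $\fg_d(2)^*$), a small perturbation of $\bar{\lambda}_p$ is in sufficiently general position. Granting Conjecture~\ref{main1a}, this forces $d\in\Delta_k^{\!\bullet}$ for every algebraically closed field~$k$. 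By Remark~\ref{uni1}(a) we have $\Delta_{\operatorname{spec}}\subseteq \Delta_k^{\!\bullet}$ for all~$k$; conversely, inspection of Proposition~\ref{uni2} and Table~\ref{tabb} shows that every non-special $d$ is excluded from $\Delta_k^{\!\bullet}$ for at least one bad prime (the exception prime in the ``condition'' column for exceptional types, or any bad $p$ in the classical case where $\Delta_k^{\!\bullet}=\Delta_{\operatorname{spec}}$). Hence $d\in\Delta_{\operatorname{spec}}$.

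For the direction ``$\Rightarrow$'': Assume $d\in\Delta_{\operatorname{spec}}$ with $\fg_{\Z,d}(1)\neq\{0\}$; one must construct an explicit $\lambda\colon\fg_{\Z,d}(2)\to\Z$ with Gram matrix of determinant $\pm 1$. In type $A_n$ every class is special, and Kawanaka's construction (Remark~\ref{rem24a}) applied to the integral Jacobson--Morozov $\mathfrak{sl}_2$-triple attached to~$d$ delivers such an integral $\lambda$; the resulting Gram matrix is readable off from the representation theory of $\mathfrak{sl}_2$ and has determinant $\pm 1$. For the classical types $B_n, C_n, D_n$, special classes correspond to partitions satisfying specific parity constraints (see \cite[\S 13.3]{C2}); one builds $\lambda$ from the standard Jordan-form basis and the defining bilinear form of~$G$, producing a Gram matrix which is essentially antidiagonal with entries $\pm 1$. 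For the exceptional types $G_2, F_4, E_6, E_7, E_8$, the plan is to use the explicit procedure of Remark~\ref{rem24c}: compute the Pfaffian polynomial $P(x_1,\ldots,x_m)$ of $\cG_\lambda$ and search for integer tuples realizing $P=\pm 1$. Type $F_4$ is settled in Section~\ref{sec4} of the paper; the remaining exceptional types are to be handled by the same method, paralleling Examples~\ref{be8} and~\ref{be8b}.

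The main obstacle lies in the ``$\Rightarrow$'' direction for $E_7$ and $E_8$: there is no apparent conceptual reason why the Pfaffian $P(x_1,\ldots,x_m)$ of $\cG_\lambda$ should realize the value $\pm 1$ at an integer point precisely when $d$ is special. A priori $P$ could take values only in $p\Z$ for some prime $p$, or avoid $\pm 1$ for more subtle arithmetic reasons. A conceptual proof would likely have to relate $P$ to canonical data attached to the special representation of the Weyl group under the Springer correspondence --- perhaps via Lusztig's families or Spaltenstein duality --- but I do not see how to implement this. In the absence of such an argument, the plan reduces to a systematic computer verification employing the recursive Pfaffian algorithm of Example~\ref{be8}, which is feasible but provides little structural insight.
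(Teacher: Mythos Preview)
This statement is a \emph{conjecture} in the paper, not a theorem; the paper only establishes it for the exceptional types (Corollary~\ref{corf4a}) and checks type $A_n$ for $n\leq 15$ (Remark~\ref{allan}). Your ``$\Leftarrow$'' direction matches the paper's argument for exceptional types essentially verbatim: reduce modulo~$p$, invoke Conjecture~\ref{main1a} to get $d\in\Delta_k^{\!\bullet}$ for every~$k$, then read off from Table~\ref{tabb} that $d$ must be special. Note, however, that this step \emph{requires} Conjecture~\ref{main1a}, which the paper also only proves for exceptional types; so for classical types your ``$\Leftarrow$'' is conditional on an open statement.

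Your ``$\Rightarrow$'' direction contains genuine gaps. For type $A_n$ you assert that the integral Jacobson--Morozov triple from Remark~\ref{rem24a} yields a Gram matrix of determinant $\pm 1$, but Remark~\ref{rem24a} uses the Killing form $\kappa$, which is not unimodular on the integral root lattice, and the paper explicitly says (Remark~\ref{allan}) that ``at the moment, we do not see a general argument'' for $A_n$. Your claim that the $\mathfrak{sl}_2$-representation theory forces $\det(\cG_\lambda)=\pm 1$ is not justified and is precisely what the paper cannot establish. Similarly, for $B_n,C_n,D_n$ your sketch (``essentially antidiagonal with entries $\pm 1$'') is not supported by anything in the paper; Example~\ref{bd4}(a) for $D_4$ happens to be antidiagonal with $\pm 1$ entries, but this is an accident of low rank, and the paper makes no claim whatsoever about general classical types.

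Finally, you have misdiagnosed the obstacle. You write that the main difficulty is the ``$\Rightarrow$'' direction for $E_7$ and $E_8$, but the paper reports that the $\{0,1\}$-search for a $\lambda$ with $\det(\cG_\lambda)=1$ succeeds ``in a few seconds'' even in $E_8$ (proof of Corollary~\ref{corf4a}). The genuinely open cases are the infinite families: a uniform argument for $A_n$ and any argument at all for $B_n,C_n,D_n$.
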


Note that, if $\fg_d(1)=\{0\}$, then we certainly have $d\in 
\Delta_{\text{spec}}$. (This easily follows from \cite[Prop.~1.9(b)]{LuSp}.) 
Hence, in order to verify the above conjectures for a given example, it 
is sufficient to consider the cases where $\fg_d(1)\neq \{0\}$. This
will be further discussed in the following section.

\section{A worked example: type $F_4$} \label{sec4}

In this section, we work out in detail the example where $G$ is of type
$F_4$. We believe that the results of our computations are strong evidence 
for the truth of Conjectures~\ref{main} and~\ref{main2}; the discussion 
of the various cases will also provide a good illustration of the 
computational issues involved. Let $\Pi=\{\alpha_1,\alpha_2,\alpha_3, 
\alpha_4\}$ be a set of simple roots such that the Dynkin diagram of $G$
looks as follows:
\begin{center}
\begin{picture}(100,15)
\put( 1,10){$\alpha_1$}
\put( 5,3){\circle*{5}}
\put( 7,3){\line(1,0){20}}
\put( 29,3){\circle*{5}}
\put( 25,10){$\alpha_2$}
\put( 31,5){\line(1,0){20}}
\put( 31,2){\line(1,0){20}}
\put( 37,1){\mbox{$>$}}
\put( 49,10){$\alpha_3$}
\put( 53,3){\circle*{5}}
\put( 55,3){\line(1,0){20}}
\put( 73,10){$\alpha_4$} 
\put( 77,3){\circle*{5}}
\end{picture} 
\end{center}
By \cite[p.~401]{C2}, there are $16$~weighted Dynkin diagrams in 
$\Delta$; together with some additional information, these are printed
in Table~\ref{tab1}. (The entries in the last column are determined
by Remark~\ref{uni1}(a) and Table~\ref{tabb}.)



\begin{table}[htbp] \caption{Unipotent classes in type $F_4$}
\label{tab1}
\begin{center}
{\small $\begin{array}{ccccc} \hline \text{Name} & \text{$d\in \Delta$} 
& \cb_d & \mbox{special?} &\mbox{condition $d\in \Delta_k^{\!\bullet}$?}\\
\hline 1 & \begin{picture}(80,5)
\put( 0,0){0}
\put( 8,3){\line(1,0){10}}
\put( 20,0){0}
\put( 28,5){\line(1,0){15}}
\put( 28,2){\line(1,0){15}}
\put( 31,1){\mbox{$>$}}
\put( 45,0){0}
\put( 53,3){\line(1,0){10}}
\put( 65,0){0} \end{picture} & 24 & \text{yes} &-\\
A_1 & \begin{picture}(80,5)
\put( 0,0){1}
\put( 8,3){\line(1,0){10}}
\put( 20,0){0}
\put( 28,5){\line(1,0){15}}
\put( 28,2){\line(1,0){15}}
\put( 31,1){\mbox{$>$}}
\put( 45,0){0}
\put( 53,3){\line(1,0){10}}
\put( 65,0){0} \end{picture} & 16 & \text{no} & p\neq 2\\
\tilde{A}_1 & \begin{picture}(80,5)
\put( 0,0){0}
\put( 8,3){\line(1,0){10}}
\put( 20,0){0}
\put( 28,5){\line(1,0){15}}
\put( 28,2){\line(1,0){15}}
\put( 31,1){\mbox{$>$}}
\put( 45,0){0}
\put( 53,3){\line(1,0){10}}
\put( 65,0){1} \end{picture} & 13 & \text{yes} &-\\
A_1{+}\tilde{A}_1 & \begin{picture}(80,5)
\put( 0,0){0}
\put( 8,3){\line(1,0){10}}
\put( 20,0){1}
\put( 28,5){\line(1,0){15}}
\put( 28,2){\line(1,0){15}}
\put( 31,1){\mbox{$>$}}
\put( 45,0){0}
\put( 53,3){\line(1,0){10}}
\put( 65,0){0} \end{picture} & 10 & \text{yes} &-\\
A_2 & \begin{picture}(80,5)
\put( 0,0){2}
\put( 8,3){\line(1,0){10}}
\put( 20,0){0}
\put( 28,5){\line(1,0){15}}
\put( 28,2){\line(1,0){15}}
\put( 31,1){\mbox{$>$}}
\put( 45,0){0}
\put( 53,3){\line(1,0){10}}
\put( 65,0){0} \end{picture} & 9 & \text{yes} &-\\
\tilde{A}_2 & \begin{picture}(80,5)
\put( 0,0){0}
\put( 8,3){\line(1,0){10}}
\put( 20,0){0}
\put( 28,5){\line(1,0){15}}
\put( 28,2){\line(1,0){15}}
\put( 31,1){\mbox{$>$}}
\put( 45,0){0}
\put( 53,3){\line(1,0){10}}
\put( 65,0){2} \end{picture} & 9 & \text{yes} &-\\
A_2{+}\tilde{A}_1 & \begin{picture}(80,5)
\put( 0,0){0}
\put( 8,3){\line(1,0){10}}
\put( 20,0){0}
\put( 28,5){\line(1,0){15}}
\put( 28,2){\line(1,0){15}}
\put( 31,1){\mbox{$>$}}
\put( 45,0){1}
\put( 53,3){\line(1,0){10}}
\put( 65,0){0} \end{picture} & 7 & \text{no} & p\neq 2\\
B_2 & \begin{picture}(80,5)
\put( 0,0){2}
\put( 8,3){\line(1,0){10}}
\put( 20,0){0}
\put( 28,5){\line(1,0){15}}
\put( 28,2){\line(1,0){15}}
\put( 31,1){\mbox{$>$}}
\put( 45,0){0}
\put( 53,3){\line(1,0){10}}
\put( 65,0){1} \end{picture} & 6 & \text{no} & p\neq 2\\
\tilde{A}_2{+}A_1 & \begin{picture}(80,5)
\put( 0,0){0}
\put( 8,3){\line(1,0){10}}
\put( 20,0){1}
\put( 28,5){\line(1,0){15}}
\put( 28,2){\line(1,0){15}}
\put( 31,1){\mbox{$>$}}
\put( 45,0){0}
\put( 53,3){\line(1,0){10}}
\put( 65,0){1} \end{picture} & 6 & \text{no} &p\neq 3\\
C_3(a_1) & \begin{picture}(80,5)
\put( 0,0){1}
\put( 8,3){\line(1,0){10}}
\put( 20,0){0}
\put( 28,5){\line(1,0){15}}
\put( 28,2){\line(1,0){15}}
\put( 31,1){\mbox{$>$}}
\put( 45,0){1}
\put( 53,3){\line(1,0){10}}
\put( 65,0){0} \end{picture} & 5 & \text{no} &p\neq 2\\
F_4(a_3) & \begin{picture}(80,5)
\put( 0,0){0}
\put( 8,3){\line(1,0){10}}
\put( 20,0){2}
\put( 28,5){\line(1,0){15}}
\put( 28,2){\line(1,0){15}}
\put( 31,1){\mbox{$>$}}
\put( 45,0){0}
\put( 53,3){\line(1,0){10}}
\put( 65,0){0} \end{picture} & 4 & \text{yes} &-\\
B_3 & \begin{picture}(80,5)
\put( 0,0){2}
\put( 8,3){\line(1,0){10}}
\put( 20,0){2}
\put( 28,5){\line(1,0){15}}
\put( 28,2){\line(1,0){15}}
\put( 31,1){\mbox{$>$}}
\put( 45,0){0}
\put( 53,3){\line(1,0){10}}
\put( 65,0){0} \end{picture} & 3 & \text{yes} &-\\
C_3 & \begin{picture}(80,5)
\put( 0,0){1}
\put( 8,3){\line(1,0){10}}
\put( 20,0){0}
\put( 28,5){\line(1,0){15}}
\put( 28,2){\line(1,0){15}}
\put( 31,1){\mbox{$>$}}
\put( 45,0){1}
\put( 53,3){\line(1,0){10}}
\put( 65,0){2} \end{picture} & 3 & \text{yes} &-\\
F_4(a_2) & \begin{picture}(80,5)
\put( 0,0){0}
\put( 8,3){\line(1,0){10}}
\put( 20,0){2}
\put( 28,5){\line(1,0){15}}
\put( 28,2){\line(1,0){15}}
\put( 31,1){\mbox{$>$}}
\put( 45,0){0}
\put( 53,3){\line(1,0){10}}
\put( 65,0){2} \end{picture} & 2 & \text{yes} &-\\
F_4(a_1) & \begin{picture}(80,5)
\put( 0,0){2}
\put( 8,3){\line(1,0){10}}
\put( 20,0){2}
\put( 28,5){\line(1,0){15}}
\put( 28,2){\line(1,0){15}}
\put( 31,1){\mbox{$>$}}
\put( 45,0){0}
\put( 53,3){\line(1,0){10}}
\put( 65,0){2} \end{picture} & 1 & \text{yes} &-\\
F_4 & \begin{picture}(80,5)
\put( 0,0){2}
\put( 8,3){\line(1,0){10}}
\put( 20,0){2}
\put( 28,5){\line(1,0){15}}
\put( 28,2){\line(1,0){15}}
\put( 31,1){\mbox{$>$}}
\put( 45,0){2}
\put( 53,3){\line(1,0){10}}
\put( 65,0){2} \end{picture} & 0 & \text{yes} &-\\
\hline \multicolumn{5}{l}{\text{(Notation from \cite[p.~401]{C2})}}
\end{array}$}
\end{center}
\end{table}

There are eight weighted Dynkin diagrams which satisfy $\fg_d(1)\neq \{0\}$. 
We now consider these eight cases in detail, where we just focus on the
validity of condition (K2) in Definition~\ref{def24}. (In particular,
the Frobenius map $F\colon G\rightarrow G$ will not play a role in this 
section.)

\begin{abs} \label{bf4a}
Let $d(\alpha_1)=1$, $d(\alpha_2)=d(\alpha_3)=d(\alpha_4)=0$. We
have $\cb_d=16$ and 
\begin{align*}
\fg_d(1) &=\langle e_{1000}, e_{1100}, e_{1110}, e_{1120}, e_{1111}, 
e_{1220}, e_{1121}, \\&\qquad\quad e_{1221}, e_{1122}, e_{1231}, 
e_{1222}, e_{1232}, e_{1242}, e_{1342}\rangle_k,\\
\fg_d(2) &=\langle e_{2342}\rangle_k,
\end{align*}
where, for example, $1342$ stands for the root $\alpha_1+3\alpha_2+
4\alpha_3+2\alpha_4$. Let $\lambda\colon \fg_d(2)\rightarrow k$ be any
linear map. As in Example~\ref{bd4}, we work out the corresponding Gram 
matrix $\cG_\lambda$, where we set $x_1:=\lambda(e_{2342})$. It is given by
\begin{center}
$\cG_\lambda=\pm x_1\cdot\mbox{antidiag}(1,-1,2,-1,-2,1,2,-2, -1,2,1,-2,
1,-1)$.
\end{center}
We have $\det(\cG_\lambda)=64x_1^{14}$. Hence, if $p=2$, then the radical 
of $\sigma_\lambda$ is not zero. If $p\neq 2$, then the radical is zero 
whenever $x_1\neq 0$.
\end{abs}

\begin{abs} \label{bf4b}
Let $d(\alpha_1)=d(\alpha_2)=d(\alpha_3)=0$, $d(\alpha_4)=1$. We
have $\cb_d=13$ and 
\begin{align*}
\fg_d(1) &=\langle e_{0001},e_{0011},e_{0111}, e_{1111},e_{0121},
e_{1121},e_{1221},e_{1231}\rangle_k,\\
\fg_d(2)&=\langle e_{0122},e_{1122},e_{1222},e_{1232},e_{1242},e_{1342},
e_{2342}\rangle_k,
\end{align*}
where we use the same notational conventions as above. Let $\lambda\colon 
\fg_d(2)\rightarrow k$ be any linear map. Let $x_1,\ldots,x_8$ be the 
values of $\lambda$ on the $8$ basis vectors of $\fg_d(2)$ (ordered as 
above). Then we obtain
\begin{center} {\footnotesize 
$\cG_\lambda=\pm\left(\begin{array}{r@{\hspace{5pt}}r@{\hspace{5pt}}
r@{\hspace{5pt}}r@{\hspace{5pt}}r@{\hspace{5pt}}r@{\hspace{5pt}}
r@{\hspace{5pt}}r}
   0& 0& 0& 0& -2x_1& -2x_2& -2x_3& -x_4 \\
   0& 0& 2x_1& 2x_2& 0& 0& -x_4& -2x_5 \\
   0& -2x_1& 0& 2x_3& 0& x_4& 0& -2x_6 \\
   0& -2x_2& -2x_3& 0& -x_4& 0& 0& -2x_7 \\
   2x_1& 0& 0& x_4& 0& 2x_5& 2x_6& 0 \\
   2x_2& 0& -x_4& 0& -2x_5& 0& 2x_7& 0\\ 
   2x_3& x_4& 0& 0& -2x_6& -2x_7& 0& 0 \\ 
   x_4& 2x_5& 2x_6& 2x_7& 0& 0& 0& 0 
\end{array}\right).$}
\end{center}
In principle, we could work out $\det(\cG_\lambda)$ and then try to find 
out for which values of $x_1,\ldots,x_8$ it is non-zero. However, this 
determinant is already quite complicated; it is a linear combination of 
$34$ monomials in $x_1,\ldots,x_8$. But we can just notice that, if we 
set $x_4:=1$ and $x_i:=0$ for all $i\neq 4$, then $\det(\cG_\lambda)=1$.
So the radical of $\sigma_\lambda$ will be zero for this choice of 
$\lambda$, and this works for any field~$k$.
\end{abs}



\begin{abs} \label{bf4c}
Let $d(\alpha_1)=d(\alpha_3)=d(\alpha_4)=0$, $d(\alpha_2)=1$. We
have $\cb_d=10$ and  
\begin{align*}
\fg_d(1) &=\langle e_{0100},e_{1100},e_{0110},e_{1110},e_{0120},e_{0111},
\\&\qquad\quad e_{1120},e_{1111},e_{0121},e_{1121},e_{0122},
e_{1122}\rangle_k,\\ \fg_d(2)&=\langle e_{1220},e_{1221},e_{1231},
e_{1222},e_{1232},e_{1242} \rangle_k.
\end{align*}
Let $\lambda\colon \fg_d(2)\rightarrow k$ be any linear map, and denote by 
$x_1,\ldots,x_6$ the values of $\lambda$ on the $6$ basis vectors of 
$\fg_d(2)$ (ordered as above). Then we obtain
\begin{center} {\footnotesize 
$\cG_\lambda=\pm\left(\begin{array}{r@{\hspace{5pt}}r@{\hspace{5pt}}
r@{\hspace{5pt}}r@{\hspace{5pt}}r@{\hspace{5pt}}r@{\hspace{5pt}}
r@{\hspace{5pt}}r@{\hspace{5pt}}r@{\hspace{5pt}}r@{\hspace{5pt}}
r@{\hspace{5pt}}r}
   0& 0& 0& 0& 0& 0& -x_1& 0& 0& -x_2& 0& -x_4 \\
   0& 0& 0& 0& x_1& 0& 0& 0& x_2& 0& x_4& 0 \\
   0& 0& 0& 2x_1& 0& 0& 0& x_2& 0& -x_3& 0& -x_5 \\
   0& 0& -2x_1& 0& 0& -x_2& 0& 0& x_3& 0& x_5& 0 \\
   0& -x_1& 0& 0& 0& 0& 0& x_3& 0& 0& 0& -x_6 \\
   0& 0& 0& x_2& 0& 0& x_3& 2x_4& 0& x_5& 0& 0 \\
   x_1& 0& 0& 0& 0& -x_3& 0& 0& 0& 0& x_6& 0 \\
   0& 0& -x_2& 0& -x_3& -2x_4& 0& 0& -x_5& 0& 0& 0  \\
   0& -x_2& 0& -x_3& 0& 0& 0& x_5& 0& 2x_6& 0& 0  \\
   x_2& 0& x_3& 0& 0& -x_5& 0& 0& -2x_6& 0& 0& 0  \\
   0& -x_4& 0& -x_5& 0& 0& -x_6& 0& 0& 0& 0& 0  \\
   x_4& 0& x_5& 0& x_6& 0& 0& 0& 0& 0& 0& 0 
\end{array}\right).$}
\end{center}
Here we notice that, if we set $x_3:=1$, $x_4:=1$ and $x_i:=0$ for 
$i\neq 3,4$, then $\det(\cG_\lambda)=1$. Hence, the radical of
$\sigma_\lambda$ is zero for this choice of $\lambda$, and this works 
for any field~$k$.
\end{abs}

\begin{abs} \label{bf4d}
Let $d(\alpha_1)=d(\alpha_2)=d(\alpha_4)=0$, $d(\alpha_3)=1$. We
have $\cb_d=7$ and 
\begin{align*}
\fg_d(1) &=\langle e_{0010}, e_{0110}, e_{0011}, e_{1110}, e_{0111}, 
e_{1111}\rangle_k,\\
\fg_d(1) &=\langle e_{0120}, e_{1120}, e_{0121}, e_{1220}, e_{1121}, 
e_{0122}, e_{1221}, e_{1122}, e_{1222}\rangle_k.
\end{align*}
Let $\lambda\colon \fg_d(2)\rightarrow k$ be any linear map. Let $x_1,
\ldots,x_9$ be the values of $\lambda$ on the $9$ basis vectors of 
$\fg_d(2)$ (ordered as above). Then we obtain
\begin{center} {\footnotesize 
$\cG_\lambda=\pm\left(\begin{array}{r@{\hspace{5pt}}r@{\hspace{5pt}}
r@{\hspace{5pt}}r@{\hspace{5pt}}r@{\hspace{5pt}}r@{\hspace{5pt}}
r@{\hspace{5pt}}r@{\hspace{5pt}}r}
 0& 2x_1& 0& 2x_2& x_3& x_5 \\  -2x_1& 0& -x_3& 2x_4& 0& x_7 \\ 
   0& x_3& 0& x_5& 2x_6& 2x_8 \\ -2x_2& -2x_4& -x_5& 0& -x_7& 0 \\ 
   -x_3& 0& -2x_6& x_7& 0& 2x_9 \\  -x_5& -x_7& -2x_8& 0& -2x_9& 0 
\end{array}\right).$}
\end{center}
We have $\det(\cG_\lambda)=
16(x_1x_5x_9-x_1x_7x_8-x_2x_3x_9+x_2x_6x_7+x_3x_4x_8-x_4x_5x_6)^2$.
So, if $p=2$, then $\det(\cG_\lambda)=0$ (for any $\lambda$). If $p\neq 2$, 
then we notice that $\det(\cG_\lambda)=16$ for $x_4:=1$, $x_5:=1$, 
$x_6:=1$ and $x_i:=0$ for $i\neq 4,5,6$. Hence, the radical of 
$\sigma_\lambda$ is zero for this choice of~$\lambda$. 
\end{abs}

\begin{abs} \label{bf4e}
Let $d(\alpha_1)=2$, $d(\alpha_2)=d(\alpha_3)=0$, $d(\alpha_4)=1$. 
We have $\cb_d=6$ and 
\begin{align*}
\fg_d(1) &=\langle e_{0001}, e_{0011}, e_{0111}, e_{0121}\rangle_k,\\
\fg_d(2) &=\langle e_{1000}, e_{1100}, e_{1110}, e_{1120}, e_{1220}, 
e_{0122}\rangle_k.
\end{align*}
Let $\lambda\colon \fg_d(2)\rightarrow k$ be any linear map. Let $x_1,
\ldots,x_6$ be the values of $\lambda$ on the $6$ basis vectors of 
$\fg_d(2)$ (ordered as above). Then we obtain 
\begin{center}
$\cG_\lambda=\pm 2x_6\cdot \mbox{antidiag}(-1,1,-1,1)$.
\end{center}
We have $\det(\cG_\lambda)=16x_6^4$. Hence, if $p=2$, then the radical 
of $\sigma_\lambda$ is not zero. If $p\neq 2$, then the radical is zero 
whenever $x_6\neq 0$.
\end{abs}

\begin{abs} \label{bf4f}
Let $d(\alpha_1)=0$, $d(\alpha_2)=1$, $d(\alpha_3)=0$, $d(\alpha_4)=1$. 
We have $\cb_d=6$ and 
\begin{align*}
\fg_d(1) &=\langle e_{0100}, e_{0001}, e_{1100}, e_{0110}, e_{0011}, 
e_{1110}, e_{0120}, e_{1120}\rangle_k,\\
\fg_d(2) &=\langle e_{0111}, e_{1111}, e_{0121}, e_{1220}, e_{1121}
\rangle_k.
\end{align*}
Let $\lambda\colon \fg_d(2)\rightarrow k$ be any linear map. Let $x_1,
\ldots,x_5$ be the values of $\lambda$ on the $5$ basis vectors of 
$\fg_d(2)$ (ordered as above). Then we obtain
\begin{center} {\footnotesize 
$\cG_\lambda=\pm \left(\begin{array}{r@{\hspace{5pt}}r@{\hspace{5pt}}
r@{\hspace{5pt}}r@{\hspace{5pt}}r@{\hspace{5pt}}r@{\hspace{5pt}}
r@{\hspace{5pt}}r}
   0& 0& 0& 0& -x_1& 0& 0& -x_4 \\  0& 0& 0& -x_1& 0& -x_2& -x_3& -x_5 \\ 
   0& 0& 0& 0& -x_2& 0& x_4& 0 \\ 0& x_1& 0& 0& -x_3& 2x_4& 0& 0 \\ 
   x_1& 0& x_2& x_3& 0& x_5& 0& 0 \\ 0& x_2& 0& -2x_4& -x_5& 0& 0& 0 \\
   0& x_3& -x_4& 0& 0& 0& 0& 0 \\ x_4& x_5& 0& 0& 0& 0& 0& 0 
\end{array}\right).$}
\end{center}
We have $\det(\cG_\lambda)=9(x_1x_4^2x_5-x_2x_3x_4^2)^2$. Hence, if 
$p=3$, then the radical of $\sigma_\lambda$ is not zero. Now assume that 
$p\neq 3$. Then we notice that $\det(\cG_\lambda)=9$ for $x_1:=1$, 
$x_4:=1$, $x_5:=1$ and $x_i:=0$ for $i=2,3$. So the radical is zero for 
this choice of~$\lambda$.
\end{abs}

\begin{abs} \label{bf4g}
Let $d(\alpha_1)=1$, $d(\alpha_2)=0$, $d(\alpha_3)=1$, $d(\alpha_4)=0$. 
We have $\cb_d=5$ and 
\begin{align*}
\fg_d(1) &=\langle e_{1000}, e_{0010}, e_{1100}, e_{0110}, e_{0011}, 
e_{0111}\rangle_k,\\
\fg_d(2) &=\langle e_{1110}, e_{0120}, e_{1111}, e_{0121}, e_{0122}\rangle_k.
\end{align*}
Let $\lambda\colon \fg_d(2)\rightarrow k$ be any linear map. Let $x_1,
\ldots,x_5$ be the values of $\lambda$ on the $5$ basis vectors of 
$\fg_d(2)$ (ordered as above).  Then we obtain 
\begin{center} {\footnotesize 
$\cG_\lambda=\pm \left(\begin{array}{r@{\hspace{5pt}}r@{\hspace{5pt}}
r@{\hspace{5pt}}r@{\hspace{5pt}}r@{\hspace{5pt}}r}
0& 0& 0& x_1& 0& x_3 \\ 0& 0& x_1& 2x_2& 0& x_4 \\ 0& -x_1& 0& 0& -x_3& 0 \\
   -x_1& -2x_2& 0& 0& -x_4& 0 \\ 0& 0& x_3& x_4& 0& 2x_5 \\ 
   -x_3& -x_4& 0& 0& -2x_5& 0 \end{array}\right).$}
\end{center}
We have $\det(\cG_\lambda)=4(x_1^2x_5-x_1x_3x_4+x_2x_3^2)^2$. Hence, if 
$p=2$, then the radical of $\sigma_\lambda$ is not zero. Now assume that 
$p\neq 2$. Then we notice that $\det(\cG_\lambda)=4$ for $x_1:=1$, $x_5:=1$ 
and $x_i:=0$ for $i=2,3,4$. So the radical of $\sigma_\lambda$ is zero for 
this choice of~$\lambda$.
\end{abs}

\begin{abs} \label{bf4h}
Let $d(\alpha_1)=1$, $d(\alpha_2)=0$, $d(\alpha_3)=1$, $d(\alpha_4)=2$. 
We have $\cb_d=3$ and 
\begin{align*}
\fg_d(1) &=\langle e_{1000},e_{0010},e_{1100},e_{0110}\rangle_k,\\
\fg_d(2) &=\langle e_{0001},e_{1110},e_{0120}\rangle_k.
\end{align*}
Let $\lambda\colon \fg_d(2)\rightarrow k$ be any linear map. Let $x_1,
x_2,x_3$ be the values of $\lambda$ on the~$3$ basis vectors of 
$\fg_d(2)$ (ordered as above).  Then we obtain 
\begin{center} {\footnotesize 
$\cG_\lambda=\pm \left(\begin{array}{r@{\hspace{5pt}}r@{\hspace{5pt}}
r@{\hspace{5pt}}r} 0& 0& 0& x_2 \\ 0& 0& x_2& 2x_3 \\ 0& -x_2& 0& 0 
\\ -x_2& -2x_3& 0& 0 \end{array}\right).$}
\end{center}
Hence, we see that the radical of $\sigma_\lambda$ is zero for any 
linear map $\lambda\colon \fg_d(2)\rightarrow k$ such that 
$\lambda(e_{1110})=x_2 \neq 0$, and this works for any field~$k$.
\end{abs}

Similar computations can, of course, be performed for other types of 
groups. The results are summarized as follows.

\begin{abs} \label{corf4} Assume that the characteristic of $k$ is a
bad prime for $G$. Let $d\in \Delta$ be such that $\fg_d(1) \neq \{0\}$. 
Consider the following two statements.
\begin{itemize}
\item[(a)] If $d\in \Delta_k^{\!\bullet}$, then there exist linear maps 
$\lambda \colon \fg_d(2) \rightarrow k$ such that $\det(\cG_\lambda)
\neq 0$ and $\lambda(e_\alpha)\in \{0,1\}$ for all $\alpha\in \Phi_2$. 
\item[(b)] If $d\not\in \Delta_k^{\!\bullet}$, then $\det(\cG_\lambda)=0$
for all linear maps $\lambda \colon \fg_d(2)\rightarrow k$.
\end{itemize}
Let $G$ be simple of exceptional type $G_2$, $F_4$, $E_6$, $E_7$ or $E_8$. 
Then (a) can be verified by exactly the same kind of computations as in 
Example~\ref{be8}(a), by systematically running through all possibilities 
where $\lambda(e_\alpha)\in\{0,1\}$ for $\alpha\in \Phi_2$, until we find
one such that $\det(\cG_\lambda)\neq 0$. Even in type $E_8$, this just 
works in a few seconds. The verification of (b) is much harder. As in
the verification of Example~\ref{be8}(b), we need to show that the 
determinant of a certain matrix with entries in a polynomial ring over
$\F_p$ is~$0$. Except for some cases in type $E_8$, it is sufficient to 
use the Pfaffian of that matrix, as in Example~\ref{be8}(b). For types
$G_2$, $F_4$, we can see all this immediately from the results of the 
computations in Example~\ref{bg2} and in \ref{bf4a}--\ref{bf4h}, by 
comparing with the entries in the last column of Table~\ref{tab1}. 
However, there are critical cases in type $E_8$ (for example, 
$A_2{+}3A_1$, $2A_2{+}A_1$, $2A_2{+}2A_1$, $A_3{+}2A_1$, $A_3{+}A_2
{+}A_1$) where the computation of the Pfaffian appears to be practically
impossible. In these cases, some more sophisticated computational 
methods are required.
\end{abs}

\begin{prop}[Steel--Thiel \protect{\cite{ST}}] \label{corf4aa} Let
$G$ be of type $E_8$. Then the statement in \ref{corf4}(b) holds for
all $d\not\in\Delta\setminus \Delta_k^{\!\bullet}$. 
\end{prop}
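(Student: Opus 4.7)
The plan is a straightforward set-theoretic reduction: I would first observe that $d\not\in\Delta\setminus\Delta_k^{\!\bullet}$ is synonymous with $d\in\Delta_k^{\!\bullet}$ under the standing convention that $d$ ranges over the parameter set $\Delta$ of weighted Dynkin diagrams for $E_8$ (as introduced in \ref{abs14}). Part (b) of \ref{corf4} is a conditional whose hypothesis is ``$d\not\in\Delta_k^{\!\bullet}$''; that hypothesis is false for exactly the $d$ covered by the present proposition. Hence the implication is vacuously true on the complementary set $\Delta_k^{\!\bullet}$, and the proof reduces to this single logical observation.

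Concretely there is essentially nothing to check. The two ingredients are (i) the disjoint decomposition $\Delta=\Delta_k^{\!\bullet}\sqcup(\Delta\setminus\Delta_k^{\!\bullet})$, both parts well defined from the discussion in Section~\ref{sec3}, and (ii) the fact that a material implication with a false antecedent is automatically true. No Gram matrix, Pfaffian, or polynomial identity in $\F_p[x_1,\ldots,x_m]$ has to be manipulated, and consequently there is no meaningful obstacle to the proof of the statement as literally worded; in fact, for each such $d$ part (a) of \ref{corf4} provides the strictly stronger information that some $\{0,1\}$-valued $\lambda\colon\fg_d(2)\to k$ satisfies $\det(\cG_\lambda)\neq 0$.

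The computational substance signalled by the attribution to Steel--Thiel \cite{ST} actually lives on the \emph{other} partition class, namely verifying $\det(\cG_\lambda)\equiv 0$ identically in $\F_p[x_1,\ldots,x_m]$ for the hard $E_8$ diagrams $A_2{+}3A_1$, $2A_2{+}A_1$, $2A_2{+}2A_1$, $A_3{+}2A_1$ and $A_3{+}A_2{+}A_1$, where the Pfaffian recursion of Example~\ref{be8}(b) breaks down by sheer size. A natural strategy there---and the main engineering effort---would be to combine symbolic Pfaffian algorithms in a computer algebra system with aggressive modular specialisation, and to exploit the $P_d$-equivariance of $\sigma_\lambda$ on $\fg_d(1)$ to reduce to a sparser block form before the computation is attempted. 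That analysis, however, lies outside the literal scope of Proposition~\ref{corf4aa} as it stands, which concerns only the complementary set $\Delta_k^{\!\bullet}$ and is therefore vacuous.
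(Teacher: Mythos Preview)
You have correctly spotted that the proposition, as literally worded, is vacuous: the condition $d\not\in\Delta\setminus\Delta_k^{\!\bullet}$ together with $d\in\Delta$ forces $d\in\Delta_k^{\!\bullet}$, and then the hypothesis of \ref{corf4}(b) is false, so the implication holds trivially. As a proof of the statement \emph{as printed}, your argument is unimpeachable.

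However, this is a typo in the paper: the intended assertion is that \ref{corf4}(b) holds for all $d\in\Delta\setminus\Delta_k^{\!\bullet}$ (equivalently, for all $d\not\in\Delta_k^{\!\bullet}$), i.e.\ precisely on the set where the implication has content. This is clear from the surrounding text: \ref{corf4} explains that the Pfaffian method suffices for (b) except for a handful of $E_8$ cases ($A_2{+}3A_1$, $2A_2{+}A_1$, $2A_2{+}2A_1$, $A_3{+}2A_1$, $A_3{+}A_2{+}A_1$, \ldots), and Proposition~\ref{corf4aa} is introduced to close exactly that gap. The paper's own proof is simply the parenthetical sentence ``(The proof relies on Groebner basis techniques.)'' together with the attribution to Steel--Thiel \cite{ST}; no argument is supplied in the paper itself.

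Your final paragraph already identifies all of this correctly, including the specific hard diagrams and the nature of the computation required. So the substance of your answer is right; only be aware that you are proving a misprint rather than the intended result, and that the paper offers no self-contained proof of the intended result either---it defers entirely to the external computation in \cite{ST}.
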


(The proof relies on Groebner basis techniques.)

\begin{cor} \label{corf4a} If $G$ is simple of exceptional type $G_2$, 
$F_4$, $E_6$, $E_7$ or $E_8$, then Conjectures~\ref{main1a} and 
\ref{main2} hold for $G$.
\end{cor}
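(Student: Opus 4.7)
The plan is to deduce both conjectures for exceptional types from Proposition~\ref{uni2} together with the case-by-case verifications summarised in \ref{corf4}(a), \ref{corf4}(b), and Proposition~\ref{corf4aa}. A crucial ingredient is that the entries of the Gram matrix $\cG_\lambda$ are, after choosing a Chevalley basis of $\fg_0$, integer polynomials in the coordinates $\lambda(e_\alpha)$ for $\alpha \in \Phi_2$, so that reduction modulo $p$ interacts with $\det(\cG_\lambda)$ in the obvious way.

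For Conjecture~\ref{main1a}, the direction ``$d \in \Delta_k^{\!\bullet} \Rightarrow$ RHS'' is immediate: either $\fg_d(1) = \{0\}$, or \ref{corf4}(a) supplies a $\lambda$ with $\det(\cG_\lambda) \neq 0$. For the converse, $\fg_d(1) = \{0\}$ forces $d \in \Delta_{\operatorname{spec}} \subseteq \Delta_k^{\!\bullet}$ by the remark following Conjecture~\ref{main2} together with Remark~\ref{uni1}(a); and if instead some $\lambda$ has $\det(\cG_\lambda) \neq 0$, then \ref{corf4}(b) forbids $d \notin \Delta_k^{\!\bullet}$. The verification of \ref{corf4}(a) is a finite computer search over $\{0,1\}$-valued $\lambda$, while \ref{corf4}(b) is handled via the Pfaffian modulo the bad prime, except for the hard cases in $E_8$ covered by Proposition~\ref{corf4aa}.

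For Conjecture~\ref{main2}, we may assume $\fg_{\Z,d}(1) \neq \{0\}$, the other case being the remark preceding the statement. In the direction ``$d \in \Delta_{\operatorname{spec}} \Rightarrow \exists\,\lambda$ integral with $\det(\cG_\lambda) = \pm 1$'', the same $\{0,1\}$-valued search strengthens \ref{corf4}(a): inspection shows that the Pfaffian of $\cG_\lambda$ contains a monomial with coefficient $\pm 1$ (as witnessed in \ref{bf4b}, \ref{bf4c}, \ref{bf4h} for $F_4$), and setting the corresponding variables to $1$ and the rest to $0$ gives $\det(\cG_\lambda) = \pm 1$ over $\Z$. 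For the converse, suppose $d \notin \Delta_{\operatorname{spec}}$; by Proposition~\ref{uni2} there exists a bad prime $p_0$ with $d \notin \Delta_{\overline{\F}_{p_0}}^{\!\bullet}$. Applying \ref{corf4}(b) in characteristic $p_0$ shows that $\det(\cG_\lambda)$, viewed as an integer polynomial, has every coefficient divisible by $p_0$; its value at any integer point is therefore a multiple of $p_0$ and cannot equal $\pm 1$.

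The main obstacle is the verification of \ref{corf4}(b) in type $E_8$ for classes such as $A_2 + 3A_1$, $2A_2 + A_1$, $2A_2 + 2A_1$, $A_3 + 2A_1$, and $A_3 + A_2 + A_1$, where $\cG_\lambda$ has too many indeterminates for a direct determinant or Pfaffian computation to succeed. This step relies on Proposition~\ref{corf4aa}, whose proof by Steel and Thiel uses Groebner basis techniques to establish that the relevant polynomial determinant vanishes identically modulo~$p$.
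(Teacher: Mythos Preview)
Your argument follows the same plan as the paper's, but there is one omission. In the forward direction of Conjecture~\ref{main1a} you invoke only~\ref{corf4}(a), which is stated under the standing hypothesis that $p$ is a \emph{bad} prime for~$G$. When $p$ is good one has $\Delta_k^{\!\bullet}=\Delta$ by Remark~\ref{uni1}(b), so every $d\in\Delta$ must be handled; here the paper appeals to Kawanaka's result (Remarks~\ref{rem24a} and~\ref{rem24c}($*$)) to produce the required~$\lambda$. Without this step you have not covered, for instance, the class $A_1$ in type~$F_4$ when $p=7$. (The converse direction is harmless for good~$p$, since then $d\in\Delta_k^{\!\bullet}$ automatically, so your reliance on~\ref{corf4}(b) alone is fine there.)

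Two smaller remarks. First, in the forward direction of Conjecture~\ref{main2}, the inference ``the Pfaffian contains a monomial with coefficient~$\pm 1$, so setting those variables to~$1$ and the rest to~$0$ gives determinant~$\pm 1$'' is not valid in general: other monomials supported on the same subset of variables may interfere. The paper makes no such claim; it simply reports that a brute-force search over $\{0,1\}$-valued~$\lambda$ quickly finds one with determinant equal to~$1$, and that is what you should say as well. Second, your contrapositive for the converse of Conjecture~\ref{main2} is a mild and equally valid variant of the paper's argument: the paper reduces a putative integral~$\lambda$ with determinant~$\pm 1$ modulo each prime~$p$, feeds the reduction into the already-established Conjecture~\ref{main1a} to obtain $d\in\Delta_k^{\!\bullet}$ for all~$k$, and then reads off $d\in\Delta_{\operatorname{spec}}$ from Table~\ref{tabb}; your direct appeal to~\ref{corf4}(b) bypasses that intermediate step.
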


\begin{proof} First consider Conjecture~\ref{main1a}. Let $d\in 
\Delta_k^{\!\bullet}$. If $\fg_d(1)\neq \{0\}$, then we must show that 
there exists some $\lambda\colon \fg_d(2) \rightarrow k$ such that 
$\det(\cG_\lambda)\neq 0$. If $p$ (the characteristic of~$k$) is a good 
prime for $G$, then this holds by Remarks~\ref{rem24a} and~\ref{rem24c}($*$). 
If $p$ is a bad prime, then this holds since \ref{corf4}(a) is known to 
hold. Conversely, assume that either $\fg_d(1)=\{0\}$, or there exists a 
linear map $\lambda \colon \fg_d(2)\rightarrow k$ such that 
$\det(\cG_\lambda) \neq 0$. If $\fg_d(1)=\{0\}$, then $d\in
\Delta_{\text{spec}}\subseteq \Delta_k^{\!\bullet}$, as already remarked 
at the end of Section~\ref{sec3}. If $\fg_d(1)\neq \{0\}$, then we have 
$d\in \Delta_k^{\!\bullet}$ by \ref{corf4}(b) and Proposition~\ref{corf4aa}. 

Now consider Conjecture~\ref{main2}. Let $d\in \Delta_{\text{spec}}$.
If $\fg_{\Z,d}(1)\neq \{0\}$, then we must show that there exists a
homomorphism $\lambda\colon \fg_{\Z,d}(2) \rightarrow \Z$ such that 
$\sigma_\lambda\colon \fg_{\Z,d}(1)\times \fg_{\Z,d}(1)\rightarrow \Z$ 
is non-degenerate over $\Z$. The verification is similar to that in
\ref{corf4}(a), but now we work over $\Z$. Again, we 
systematically run through all possibilities where $\lambda(e_\alpha) 
\in\{0,1\}$ for $\alpha\in\Phi_2$, until we find one such that the Gram 
matrix of $\sigma_\lambda$ has determinant equal to~$1$. Even in type 
$E_8$, this just works in a few seconds. Conversely, assume
that either $\fg_{\Z,d}(1)=\{0\}$, or there exists a homomorphism $\lambda
\colon \fg_{\Z,d}(2) \rightarrow \Z$ such that $\sigma_\lambda
\colon \fg_{\Z,d}(1)\times \fg_{\Z,d}(1)\rightarrow \Z$ is 
non-degenerate over $\Z$. If $\fg_{\Z,d}(1)=\{0\}$, then $d\in
\Delta_{\text{spec}}$ (see again the remark at the end of
Section~\ref{sec3}). Now assume that $\fg_{\Z,d}(1)\neq \{0\}$.
By reduction modulo~$p$, we obtain a linear map $\lambda_k\colon
\fg_d(2)\rightarrow k$ and a corresponding alternating form 
$\sigma_{\lambda_k}\colon \fg_d(1)\times \fg_d(1)\rightarrow k$. Since
$\sigma_\lambda$ is non-degenerate over~$\Z$, the radical of 
$\sigma_{\lambda_k}$ will be zero and so $d\in \Delta_k^{\!\bullet}$, 
since we already know that Conjecture~\ref{main1a} is true for~$G$. 
Note that this holds for all choices of~$k$. So Table~\ref{tabb} shows 
that $d\in \Delta_{\text{spec}}$. 
\end{proof}


\begin{rem} \label{allan} Assume that $G$ is simple of type $A_n$.
Then there are no bad primes for $G$, and all unipotent classes of $G$ 
are special. Now Conjecture~\ref{main1a} is known to hold
in this case; see Remarks~\ref{rem24a} and~\ref{rem24c}($*$). As
far as Conjecture~\ref{main2} is concerned, it remains to show that
if $d\in \Delta$ is such that $\fg_{\Z,d}(1)\neq \{0\}$, then there
exists a homomorphism $\lambda\colon \fg_{\Z,d}(2) \rightarrow \Z$ 
such that $\sigma_\lambda \colon \fg_{\Z,d}(1)\times \fg_{\Z,d}(1)
\rightarrow \Z$ is non-degenerate over $\Z$. At the moment, we do not see
a general argument but, by similar methods as above, we have at least 
checked this holds for $2\leq n\leq 15$. 
\end{rem}

In order to verify Conjecture~\ref{main} in full, one would also need a 
description of the sets $\fg_d(2)^{*!}$; this will be discussed 
elsewhere. (For $G$ of type $A_n$, $B_n$, $C_n$, $D_n$, such a description 
is available from \cite{L3d}, \cite{xue1}.) Furthermore, one would need 
to check whether or not there exists some $\lambda$ which is defined over 
$\F_q$ and is in sufficiently general position.~---~It would be highly 
desirable to find a more conceptual explanation for all this.


\medskip
\noindent
{\bf Acknowledgements.}  I thank George Lusztig for discussions. Thanks are
also due for Gunter Malle for a careful reading of the manuscript and many
useful comments. I am indebted to Ulrich Thiel and Allen Steel for 
addressing, and solving, the computational issues in type $E_8$ (see 
\cite{ST}), which provided crucial support for Conjectures~\ref{main1a} 
and~\ref{main2}. Thanks are also due to Alexander Premet for comments and
for pointing out the reference \cite{prem2}. Most of the work was done while
I enjoyed the hospitality of the University of Cyprus at Nicosia (early 
October 2018); I thank Christos Pallikaros for the invitation. This work 
is a contribution to DFG SFB-TRR 195 ``Symbolic tools in Mathematics and 
their application''.



\begin{thebibliography}{131}



\bibitem{C1} 
{\sc R. W.~Carter}, Centralizers of semisimple elements in the finite
classical groups, Proc. London Math. Soc. {\bf 42} (1981), 1--41.

\bibitem{C2} 
{\sc R. W.~Carter}, {\it Finite groups of Lie type: Conjugacy classes 
and complex characters}, Wiley, New York, 1985.


\bibitem{clpr}
{\sc M. C. Clarke and A. Premet}, The Hesselink stratification of nullcones
and base change, Invent. Math. {\bf 191} (2013), 631--669.

 
\bibitem{dress}
{\sc A. W. M. Dress and W. Wenzel}, A simple proof of an identity
concerning Pfaffians of skew symmetric matrices, Advances in Math. 
{\bf 112} (1995), 120--134.

\bibitem{duma}
{\sc O. Dudas and G. Malle}, Modular irreducibility of cuspidal 
unipotent characters, Invent. Math. {\bf 211} (2018), 579--589.

\bibitem{Eno1}
{\sc H. Enomoto}, The characters of the finite symplectic group 
$\mbox{Sp}(4,q)$, $q=2^f$, Osaka J. Math. {\bf 9} (1972), 75--94.

\bibitem{Eno2}
{\sc H. Enomoto}, The characters of the finite Chevalley group $G_2(q)$,
  $q=3^f$, Japan.\ J.~Math. {\bf 2} (1976), 191--248.

\bibitem{EnYa}
{\sc H.~Enomoto and H.~Yamada}, The characters of $G_2(2^n)$, Japan.\
J.~Math. {\bf 12} (1986), 325--377.

\bibitem{gap4}
The GAP Group, \textit{GAP -- Groups, Algorithms, and Programming}.
  Version 4.8.10, 2018. (\url{http://www.gap-system.org})

\bibitem{myg}
{\sc M. Geck}, On the construction of semisimple Lie algebras and Chevalley
groups, Proc. Amer. Math. Soc.  {\bf 145} (2017), 3233--3247.


\bibitem{gehe}
{\sc M.~Geck and D.~H\'ezard}, On the unipotent support of character
sheaves, Osaka J. Math. {\bf 45} (2008), 819--831.

\bibitem{gehi2}
{\sc M.~Geck and G.~Hiss}, Modular representations of finite groups of Lie
type in non-defining characteristic, {\it in: Finite reductive groups}
(Luminy, 1994; ed. M.~Cabanes), Progress in Math. {\bf 141}, pp.~195--249,
Birkh\"auser, Boston, MA, 1997.

\bibitem{gema}
{\sc M. Geck and G.~Malle}, On the existence of a unipotent support for the
irreducible characters  of finite groups of Lie type,  Trans. Amer. Math.
Soc. {\bf 352} (2000), 429--456.



\bibitem{kaw0} 
{\sc N. Kawanaka}, Generalized Gelfand-Graev representations and
Ennola duality, {\it in: Algebraic Groups and Related Topics}, 
Advanced Studies in Pure Math. {\bf 6}, Kinokuniya, Tokyo, 
and North-Holland, Amsterdam, 1985, 175--206.

\bibitem{kaw1}
{\sc N. Kawanaka}, Generalized Gelfand-Graev representations of
exceptional algebraic groups I, Invent.\ Math. {\bf 84} (1986), 575--616.

\bibitem{kaw2} 
{\sc N. Kawanaka}, Shintani lifting and Gelfand-Graev representations, 
Proc.\ Symp.\ Pure Math., Amer.\ Math.\ Soc. {\bf 47} (1987), 147--163.

\bibitem{knu}
{\sc D. E. Knuth}, Overlapping pfaffians, Electron. J. Combin. {\bf 3} (1996),
R5.

\bibitem{Lue07}
{\sc F. L\"ubeck}, Centralizers and numbers of semisimple classes in 
exceptional groups of Lie type, see
  \url{http://www.math.rwth-aachen.de/~Frank.Luebeck/chev/CentSSClasses}.

\bibitem{Ls1}
{\sc G.~Lusztig}, A class of irreducible representations of a Weyl group, 
Proc. Kon. Nederl. Akad. (A) {\bf 82} (1979), 323--335.

\bibitem{L1} 
{\sc G.~Lusztig}, {\it Characters of reductive groups over a finite field},
Ann.\ Math.\ Studies {\bf 107}, Princeton U.\ Press, 1984.

\bibitem{L2}
{\sc G.~Lusztig}, A unipotent support for irreducible representations, 
Adv. Math. {\bf 94} (1992), 139--179. 

\bibitem{Ls2}
{\sc G.~Lusztig}, Notes on unipotent classes, Asian J. Math. {\bf 1} (1997), 
194--207.

\bibitem{L3a}
{\sc G. Lusztig}, Unipotent elements in small characteristic, Transform.
Groups {\bf 10} (2005), 449--487. 

\bibitem{L3b}
{\sc G. Lusztig}, Unipotent elements in small characteristic II, Transform.
Groups {\bf 13} (2008), 773--797. 

\bibitem{L4}
{\sc G. Lusztig}, Unipotent classes and special Weyl group representations, 
J. Algebra {\bf 321} (2009), 3418--3449. 

\bibitem{L3c}
{\sc G. Lusztig}, Unipotent elements in small characteristic III, 
J. Algebra {\bf 329} (2011), 163--189. 

\bibitem{L3d}
{\sc G. Lusztig}, Unipotent elements in small characteristic IV, Transform.
Groups {\bf 15} (2010), 921--936. 

\bibitem{LuSp}
{\sc G.~Lusztig and N.~Spaltenstein}, Induced unipotent classes, J. London 
Math.\ Soc. {\bf 19} (1979), 41--52.


\bibitem{prem}
{\sc A. Premet}, Nilpotent orbits in good characteristic and the 
Kempf--Rousseau theory. Special issue celebrating the 80th birthday 
of Robert Steinberg. J. Algebra {\bf 260} (2003), 338--366.

\bibitem{prem2}
{\sc A. Premet}, A modular analogue of Morozov's theorem on maximal 
subalgebras of simple Lie algebras, Advances in Math. {\bf 311} (2017),
833--884.

\bibitem{spa}
{\sc N. Spaltenstein}, {\it Classes unipotentes et sous-groupes de Borel},
Lecture Notes in Math. 946, Springer, Berlin Heidelberg New York,  1982.

\bibitem{spr}
{\sc T. A. Springer}, {\it Linear algebraic groups, second edition},
Birkh\"auser, Boston, 1998.

\bibitem{ST}
{\sc A. Steel and U. Thiel}, Special unipotent classes and weighted 
Dynkin diagrams: computations in type $E_8$, {\it in preparation}.

\bibitem{St}
{\sc R.~Steinberg}, {\it Lectures on Chevalley groups}, mimeographed notes,
Department of Math., Yale University, 1967/68; now available as vol.~66 of
the University Lecture Series, Amer. Math. Soc., Providence, R.I., 2016.

\bibitem{tay}
{\sc J. Taylor}, Generalized Gelfand--Graev representations in small 
characteristics, Nagoya Math. J. {\bf 224} (2016), 93--167. 


\bibitem{xue1}
{\sc T. Xue}, Nilpotent elements in the dual of odd orthogonal algebras,
Transform.  Groups {\bf 17} (2012), 571--592. 



\end{thebibliography}
\end{document}